\documentclass[reqno,12pt,letterpaper]{amsart}

\usepackage{amsmath,amssymb,amsthm,mathtools,microtype}
\usepackage[usenames,dvipsnames]{xcolor}

\usepackage[colorlinks=true,linkcolor=Red,citecolor=Green]{hyperref}

\usepackage{graphicx}
\usepackage{tikz}
\usetikzlibrary{arrows.meta}

\allowdisplaybreaks

\newtheorem{theorem}{Theorem}
\newtheorem{proposition}{Proposition}
\newtheorem{lemma}[proposition]{Lemma}

\numberwithin{equation}{section}

\newcommand{\R}{\mathbb R}
\newcommand{\C}{\mathbb C}
\renewcommand{\Im}{\operatorname{Im}}
\renewcommand{\Re}{\operatorname{Re}}
\newcommand{\im}{\Im}
\newcommand{\re}{\Re}
\newcommand{\supp}{\operatorname{supp}}
\newcommand{\norm}[1]{\lVert #1\rVert}
\newcommand{\ip}[2]{\langle #1,#2\rangle}

\title[Bouncing-ball modes]
{Bouncing-ball modes in ergodic billiards}

\author{Zhenhao Li}
\email{zli1010@berkeley.edu }

\author{Maciej Zworski} 
\email{zworski@berkeley.edu}
\address{Department of Mathematics, University of California, Berkeley, CA 94720}

\begin{document}
\begin{abstract}
We show that for a horizontally stretched Bunimovich billiard, {\em for almost every stretching parameter}, there exists a sequence of eigenfunctions
concentrating in the rectangular part. 
Thanks to the work of Markarian–Oliffson Kamphorst–Pinto de Carvalho, we know that for an open set of parameters such billiards are ergodic.

The proof is a result of interaction with ChatGPT 6: we were curious if it could establish the existence of bouncing ball modes for Bunimovich or Sinai billiards. That was not successful until we suggested varying the wings. That produced an essentially complete argument for the existence of quasimodes. We then suggested using Hassell's parameter-variation argument which led to the result about eigenfunctions. The argument was then significantly simplified and clarified by the authors.

\end{abstract}

\maketitle

\section{Introduction}
\label{s:int}

The celebrated theorem of Shnirelman \cite{Sh74} states that classical
ergodicity forces most high energy eigenfunctions to become uniformly
distributed. For billiards, this means that an orthonormal basis of
Dirichlet eigenfunctions contains a density-one subsequence whose
probability densities converge to normalized area measure. That
case was proved by G\'erard--Leichtnam \cite{GL} for domains including Bunimovich billiards and by
Zelditch--Zworski \cite{ZZ96} for manifolds with arbitrary piecewise smooth boundaries.

The theorem allows however the possibility of exceptional sequences
concentrating on sets of zero phase space volume. That was established by Hassell \cite{Ha} who showed that for almost every length of the rectangle ($t$ in 
Figure~\ref{fig:billiard} with $ a = 1$) such a sequence exists for the Bunimovich stadium.
\begin{figure}[!b]
\centering
\begin{minipage}[c]{0.47\linewidth}
\centering
\begin{tikzpicture}[x=0.94cm,y=0.94cm,>=Stealth,font=\small]
 \def\billa{1.06}
 \def\billL{1.908} 
 \path[use as bounding box] (-3.53,-1.62) rectangle (3.42,1.78);
 \fill[black!12] (-\billL,-1) rectangle (\billL,1);
 \draw[densely dashed,black!50,line width=.45pt]
       (-\billL,-1)--(-\billL,1) (\billL,-1)--(\billL,1);
 \draw[line width=.9pt]
       (-\billL,1)--(\billL,1)
       arc[start angle=90,end angle=-90,x radius=\billa,y radius=1]
       --(-\billL,-1)
       arc[start angle=-90,end angle=-270,x radius=\billa,y radius=1];
 \draw[<->,line width=.6pt] (-.82,-.78)--(-.82,.78);
 \draw[<->,line width=.6pt] (.82,-.78)--(.82,.78);
 \node at (0,0) {$R_a$};
 \draw[<->,line width=.45pt] (-3.27,-1)--node[left] {$2$}(-3.27,1);
 \draw[black!50,line width=.35pt] (-\billL,-1.08)--(-\billL,-1.48)
                    (\billL,-1.08)--(\billL,-1.48);
 \draw[<->,line width=.45pt] (-\billL,-1.34)--node[fill=white,inner sep=1.6pt] {$2at$}(\billL,-1.34);
 \draw[black!50,line width=.35pt] (\billL,1.08)--(\billL,1.53)
                      ({\billL+\billa},.12)--({\billL+\billa},1.53);
 \draw[<->,line width=.55pt] (\billL,1.39)--({\billL+\billa},1.39);
 \node[above,inner sep=1.5pt] at ({\billL+.5*\billa},1.42) {$a=c^{-1/2}$};
\end{tikzpicture}
\end{minipage}\hfill
\begin{minipage}[c]{0.51\linewidth}
\centering
\includegraphics[width=\linewidth]{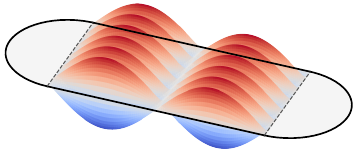}
\end{minipage}
\caption{The elliptical stadium $S_a$, $a=c^{-1/2}$ and two bouncing-ball trajectories. Right:  $q_6$ from \eqref{eq:seed}, zero in the wings; Theorem \ref{t:main} shows that it produces accurate quasimodes.}
\label{fig:billiard}
\end{figure}

That sequence was not specified in \cite{Ha} and a 
natural candidate comes from the bouncing-ball trajectories perpendicular to
the parallel sides of its rectangle. We prove the existence of such
sequences for almost every member of the family with elliptical wings
shown in Figure~\ref{fig:billiard}.

We state a rough version of the main result as follows:
\begin{theorem}\label{t:1}
Fix $ t> 0 $ and consider the billiard $ S_a $ shown in  Figure~\ref{fig:billiard}: 
\[ S_a := \left\{ ( x, y ) : |y | < 1, |x| < at + a \sqrt{1 -y^2 } \right\}, \ \ \ R_a := S_a \cap \{ |x| <at \} 
. \]
For almost every $ a > 0 $ there exists integers $ m_k \to \infty $ and
normalised Dirichlet eigenfunctions
\begin{equation}\label{eq:efphys}
 - \Delta \psi_k=\lambda_k\psi_k,\qquad
 \lambda_k=(m_k\pi)^2+ {\pi^2}/{a^2t^2} +o(1), \qquad\| \psi_k \|_{ L^2 ( S_a ) } = 1, 
 \end{equation}
such that for every fixed $\varepsilon>0$,
\begin{equation}
\label{eq:wings}
 \int_{S_a\cap\{|x|\geq at+\varepsilon\}}|\psi_k|^2d xd y
 \to0. 
\end{equation}
In other words, the mass of $ \psi_k $ goes to $ 0 $ outside any neighbourhood of the rectangle $ R_a $. 
\end{theorem}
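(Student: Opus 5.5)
Here is the plan; it has three stages. First I build very accurate bouncing-ball quasimodes supported in the rectangle. Next I use the wing parameter $a$ to make them accurate for most $a$. Finally I turn quasimodes into eigenfunctions via Hassell's parameter-variation argument.

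\emph{Quasimodes.} Rescale so the domain is fixed: put $x = a\xi$. Then $-\Delta$ becomes $-a^{-2}\partial_\xi^2 - \partial_y^2 = -c\,\partial_\xi^2-\partial_y^2$ on the stadium with circular wings $\{|\xi|<t+\sqrt{1-y^2}\}$. The rectangle is now $\{|\xi|<t\}$, independent of $a$, and the ellipticity parameter $c=a^{-2}$ enters only through the operator $P(c)=-c\partial_\xi^2-\partial_y^2$. As a seed I take $u_m(\xi,y)=\chi(\xi)\sin(m\pi(y+1)/2)$ (or $\cos$, depending on parity), with $\chi$ a function on $|\xi|<t$ vanishing near $|\xi|=t$ to high order. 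This gives quasimodes with $\|(P(c)-(m\pi/2)^2-c\mu)u_m\|=O(c)$. That error is not small by itself, since it only comes from the $\xi$-direction. This is where the family in $c$ helps: the relevant quantity is the $O(1)$ spectral parameter $\mu$ for $-\partial_\xi^2$ in the $\xi$-direction, plus a small coupling coming from the wings. Concretely, the spectrum of $P(c)$ near $(m\pi/2)^2$ is governed by an effective one-dimensional problem in $\xi$, with energy window of width $O(1)$ after dividing by $c$. Choosing $\chi=\cos(\pi \xi/2t)$-type profiles (the $\pi^2/a^2t^2$ term in \eqref{eq:efphys}) then gives quasimodes with error $o(1)$, with $\chi$ vanishing in the wings. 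The boundary mismatch at $|\xi|=t$ is handled by the fact that $y$-modes with $y$-frequency $m\pi/2$ cannot propagate into the wings, where the vertical width is smaller. There they are exponentially/evanescently damped, so one corrects by a wing solution of size $o(1)$ localized within $\varepsilon$ of $|\xi|=t$.

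\emph{From quasimodes to eigenfunctions.} Following Hassell, let $E_j(c)$ be the eigenvalues of the rescaled problem. Hadamard variation gives $\partial_c E_j(c)=\langle -\partial_\xi^2\phi_j,\phi_j\rangle = c^{-1}(E_j - \|\partial_y\phi_j\|^2)$, with a sign and a size controlled by how much mass lies in the $\xi$-momentum. Fix a window $I_m=[(m\pi/2)^2+c\mu-\delta,(m\pi/2)^2+c\mu+\delta]$. Suppose that, for $c$ in a positive-measure set, every eigenfunction with eigenvalue in $I_m$ carried non-negligible mass in the wings $|\xi|\ge t+\varepsilon$. The quasimode then forces an eigenvalue into $I_m$, and spectral decomposition shows that the projection of $u_m$ onto eigenvalues in $I_m$ has norm close to $1$. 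If that projection spread over many eigenfunctions with nonzero wing mass, then $\langle(1-\mathbf 1_{\text{wings}})\cdot,\cdot\rangle$ of the projected state would fail to be $\approx 1$, contradicting the support of $u_m$. So some normalized combination $\psi$ in the spectral window has wing mass $\to0$.

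The real content is to get an honest \emph{eigenfunction}, i.e.\ a single $E_j$ in $I_m$, for almost every $c$. I would argue as Hassell does. Eigenvalues whose eigenfunctions have $\xi$-energy bounded below move with $c$ at a definite rate, so each crosses the window $I_m$ for only a set of $c$ of measure $O(\delta)$. Counting, by Weyl's law, the eigenvalues near $(m\pi/2)^2$ that could cross, one needs $\delta=\delta_m$ summable after multiplication by that count, and Borel--Cantelli then gives for a.e.\ $c$ a subsequence $m_k$ where the window contains only eigenvalues that are bouncing-ball-like, hence concentrated in the rectangle. The main obstacle I expect is exactly this counting-versus-speed tradeoff. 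The number of eigenvalues in a unit window grows like $m$, while each moves at speed $\approx c^{-1}\times$($\xi$-energy). So I would first try to show that eigenfunctions in the window with non-negligible wing mass have $\xi$-energy $\gtrsim m^{\alpha}$, since otherwise they would essentially be bouncing-ball modes. Their crossing time is then $O(\delta m^{-\alpha})$, and summing over the $O(m)$ candidates and over $m$ along a sparse sequence yields a.e.\ conclusions. Undoing the rescaling gives \eqref{eq:efphys} and \eqref{eq:wings}.
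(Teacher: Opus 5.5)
\paragraph{The quasimode step is not proved.} Your overall architecture is the paper's: a fixed domain with $-c\partial_\xi^2-\partial_y^2$, seeds supported in the rectangle, Hassell's variation with $\partial_cE_j=\norm{\partial_\xi\phi_j}^2$, shrinking windows and Borel--Cantelli. But the step that carries the theorem, quasimodes with $o(1)$ error, is missing.

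The profile $\chi\mathbf 1_{|\xi|<t}$ with $\chi(\pm t)=0\neq\chi'(\pm t)$ has a jump of $\partial_\xi$ on $\Gamma=\{|\xi|=t\}$. So the error is a layer term proportional to $c\,\chi'(t)\,e_m\delta_\Gamma$, with $e_m$ your transverse mode; it is not an $L^2$ function at all.

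A correction $w$ living only in the wing cannot remove this layer. It lies in $H^1_0$ only if its trace on $\Gamma$ vanishes. Green's formula against the wing solution $W$ (Dirichlet data $e_m$ on $\Gamma$, zero on the arc) then gives $|c\chi'(t)|\le\norm{(P(c)-E)w}\,\norm{W}$. So the error stays bounded below unless $\norm W\to\infty$, and under your own evanescence picture ($\norm W\to0$) it blows up.

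The correction therefore has to change the trace on $\Gamma$ and live on both sides. It is the solution $U_m$ of the transmission problem $(H_c-E-z)U_m=e_m\delta_\Gamma$ on the whole domain. Its size, $\norm{U_m}^2=\Im g_m$ with $g_m=\ip{U_m|_\Gamma}{e_m}$, is exactly what has to be shown small.

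Evanescence does not show it. It is an adiabatic one-mode heuristic, but the cross-section shrinks in the wing: $e_m$ is not a transverse mode there, and all lower transverse modes propagate at energy $\approx(m\pi/2)^2$. The coupling into them near the tips is precisely the leakage of the bouncing-ball family into the rest of phase space, and you give no bound for it.

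You also announce that varying $a$ makes the quasimodes accurate ``for most $a$'', but no averaging mechanism appears. The argument is at fixed $c$, so if it worked it would give $o(1)$ quasimodes for \emph{every} $a$, which is more than the paper proves. The paper explicitly says that $O(1)$ errors, enough for Hassell, are standard, and that making them tend to zero is the main new difficulty.

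\paragraph{The missing idea, and repairs to the eigenfunction stage.} The paper gets this smallness, and the ``almost every $a$'', by complexifying $c$.
\begin{itemize}
\item The identity $\Im g_m(c,z)=(-\Im c)\norm{\partial_xU_m}^2+(\Im z)\norm{U_m}^2$ at $c=1-i$, $z=i$ gives $\norm{U_m}^2+\norm{\partial_xU_m}^2\le1$ with no loss in $m$.
\item A two-multiplier semiclassical argument in the cap shows that weak limits of $|U_m|^2$ are $y$-invariant. The collar estimates at the arc then force them to vanish, so $g_m(1-i,i)\to0$ by the trace inequality.
\item $1+\Re\frac{g_m-i}{g_m+i}$ is a bounded nonnegative harmonic function of $c$ on $\{\Re c>0,\ \Im c<0\}$. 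The Poisson formula (after $c\mapsto c^2$) and Harnack's inequality turn the decay at $1-i$ into $\int_I\Im g_m(c,c\nu+i)\,dc\to0$.
\end{itemize}
So the quasimodes $q_m+c\gamma_tU_m(c,c\nu+i)$ are $o(1)$-accurate only on average in $c$.

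Your Hassell stage also needs repair.
\begin{itemize}
\item The eigenvalues that can enter a window as $c$ runs over a compact interval are all $\lambda_j$ with $j\lesssim m^2$, because speeds are of order $m^2$. (Also, in dimension two a unit window contains $O(1)$ eigenvalues on average, not $O(m)$.)
\item Hence the bad set has measure $\lesssim(\eta+\nu r_m)/\beta$ only for the speed threshold $\beta m^2$. Your threshold $m^\alpha$ with $\alpha<2$ would need windows of width $\ll m^{\alpha-2}$.
\item Repeated entries into a window must be handled, as the paper does with the truncation $T_\eta$; a single crossing time is not enough.
\item The step ``$\lambda^{-1}\norm{\partial_\xi\varphi}^2\to0$ implies no wing mass'' is not just ``they are essentially bouncing-ball modes''. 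It needs semiclassical defect measures and Burq's propagation at smooth boundary points: vertical rays in the wing meet the arc transversally and are gliding at the tips.
\end{itemize}

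Finally, $\cos(\pi\xi/2t)$ gives the shift $\pi^2/(4a^2t^2)$, not the $\pi^2/(a^2t^2)$ in the statement, which corresponds to the odd profile $\sin(\pi\xi/t)$.
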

A more precise version justifying the bouncing ball title is presented in \S \ref{s:efloc}. The remark after Theorem \ref{t:main} gives a statement valid for 
a family of fixed internal rectangles. 

The limit \eqref{eq:wings} in Theorem \ref{t:1} should be compared to a result of Burq--Zworski \cite{buzy05} which states that for {\em any} sequence of normalised eigenfunctions and any $ \varepsilon > 0 $ there exists $ C_\varepsilon > 0 $ such that 
\begin{equation}
\label{eq:rect} \liminf_{ k \to \infty } \int_{ S_a 
\cap {\{ |x| \geq at - \varepsilon \}} } | \psi_k |^2 dx dy > \frac{1}{C_\varepsilon}. 
\end{equation}

Markarian--Oliffson Kamphorst--Pinto de Carvalho \cite[Theorem 2]{MKC} proved ergodicity of the billiard flow for $ S_a $ under the condition that
\begin{equation}\label{eq:erg}
 1<a<\sqrt{4-2\sqrt2},\qquad at>2a^2\sqrt{a^2-1}.
\end{equation}
The range of $ a$'s and $ t$'s is shown in Figure~\ref{f:erg}. 

\begin{figure}
\includegraphics[width=14cm]{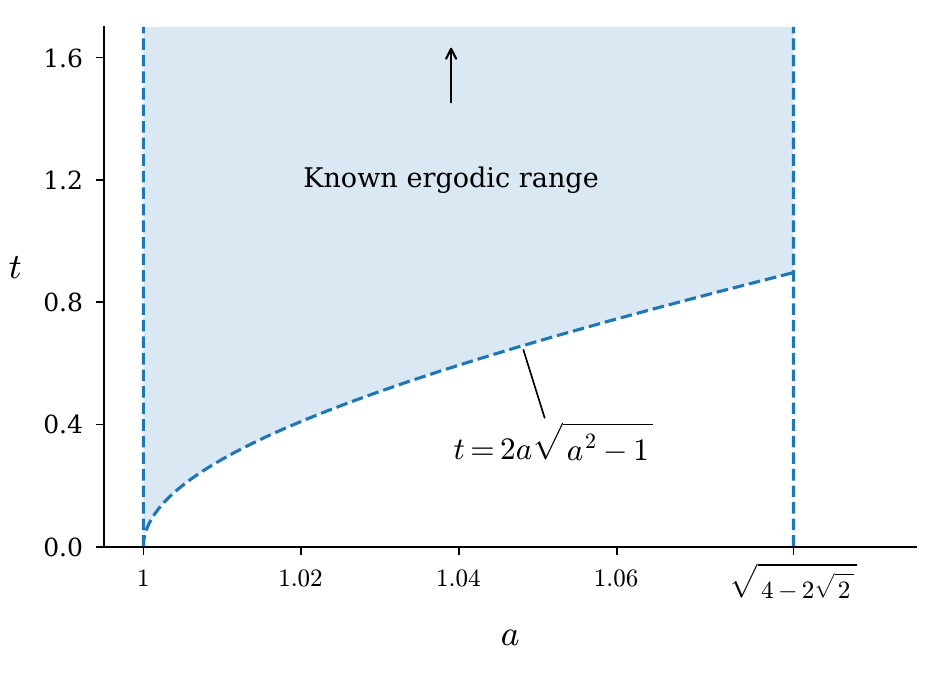}
\caption{\label{f:erg} An illustration of the condition 
\eqref{eq:erg} from \cite{MKC} on $ (a , t )$ guaranteeing ergodicity of the billiard flow for $ S_a $ in Figure~\ref{fig:billiard}.}
\end{figure}

The theorem is a consequence of estimates needed to establish existence of good quasimodes obtained from bouncing ball modes. For that it is convenient to work in a fixed domain and consider a quarter billiard, which we will do from now on. Thus we fix $ t > 0 $ and define  $\Omega $ to be the interior of $\overline R\cup\overline C$, where 
\begin{equation}\label{eq:geom}
 \begin{gathered}
 R=(-t,0)\times(0,1),\ \  \Gamma=\{0\}\times(0,1),\ \
 C=\{x>0,\ y>0,\ x^2+y^2<1\}.
 \end{gathered}
\end{equation}
For real $c>0$, let $H_c$ be the self-adjoint Dirichlet realization of
\begin{equation}\label{eq:H}
 H_c=-c\partial_x^2-\partial_y^2.
\end{equation}
This is unitarily equivalent to the Euclidean Dirichlet Laplacian on the horizontally stretched quarter-stadium obtained from $ S_a $ by choosing
$ a = 1/\sqrt c $. 

The bouncing ball modes in $ R $ extended by $ 0 $ to all of $ \Omega $ is given by 
\begin{equation}\label{eq:seed}
 \begin{gathered}
q_m(x,y):=\sqrt{\frac2t}\sin\frac{\pi(x+t)}t\,e_m(y)\,\boldsymbol 1_R, \ \ \ 
 e_m(y): =\sqrt2\sin(m\pi y), \ \ \ \norm{q_m}=1 . 
 \end{gathered}
\end{equation}

\begin{theorem}\label{t:main}
For almost every real $c>0$, there are integers $m_k\to\infty$ and $u_k\in H^2 ( \Omega ) 
\cap H^1_0(\Omega) $ such that
\begin{equation}\label{eq:qm}
 \norm{u_k-q_{m_k}}\to0,\qquad
 \norm{(H_c-(m_k\pi)^2-c\nu)u_k}\to0, \qquad \nu := \pi^2/t^2 . 
\end{equation}
\end{theorem}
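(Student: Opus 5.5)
We separate variables in $y$: write $u(x,y)=\sum_n v_n(x)e_n(y)$ on the rectangle. The seed $q_m$ fails to be a good quasimode only because it is not smooth across $\Gamma$. Its $x$-derivative jumps there, and the jump is $\partial_x q_m|_{x=0}=-\sqrt{2/t}\,(\pi/t)\,e_m(y)$. We correct $q_m$ by a function supported near $\Gamma$ and in the wing $C$. This correction solves an approximate transmission problem at energy $E_m=(m\pi)^2+c\nu$.

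In the rectangle, the $n$-th mode satisfies $-cv_n''+((n\pi)^2-E_m)v_n=0$. For $n>m$ it is evanescent, decaying like $e^{-\kappa_n|x|}$ with $\kappa_n=\sqrt{((n\pi)^2-(m\pi)^2-c\nu)/c}$, so it can be cut off at negligible cost. For $n\le m$ it oscillates and would carry mass along the whole rectangle, which is the dangerous part. The plan is to construct $w_m$ with $\|w_m\|\to0$ such that $u_m=q_m+w_m$ satisfies the transmission condition across $\Gamma$ up to $o(1)$ errors.

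\textbf{Step 1 (a local model near the corners of $\Gamma$).} Near $\Gamma$, the wing $\{x^2+y^2<1\}$ reaches $x>0$ with vertical thickness $\sqrt{1-x^2}$. The mode $e_m$ can only be continued into the wing adiabatically, where $m\pi/\sqrt{1-x^2}$ exceeds $m\pi$. So the continuation into $C$ is classically forbidden in $x$, and the natural correction there is a WKB/evanescent profile $e_m(y/\sqrt{1-x^2})\,\phi(x)$ with $\phi$ decaying on scale $m^{-1/2}$ or faster. Matching the value and the derivative at $x=0$ leaves a jump defect of size $O(1)$ in $\partial_x$. This defect has to be absorbed by a boundary-layer profile in the rectangle.

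\textbf{Step 2 (the boundary layer and the resonance condition).} The jump data $g(y)$ on $\Gamma$ is projected onto modes. Solving $(H_c-E_m)w=0$ in $R$ with Neumann data $g$ at $x=0$ gives evanescent pieces for $n>m$, which are fine. For $n\le m$ the pieces are oscillatory with amplitude $\sim|\langle g,e_n\rangle|/k_n$, where $k_n=\sqrt{(E_m-(n\pi)^2)/c}$. These must be small, and the $n=m$ component is fixed by construction. For $n<m$ we have $k_n\gtrsim \sqrt{m}$, and smoothness of $g$ in the rescaled variable makes $\langle g,e_n\rangle$ decay in $|n-m|$.

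The real danger is near-resonance: the boundary problem on $[-t,0]$ with Dirichlet condition at $-t$ requires solving for $v_n$ with denominators $\sin(k_n t)$. When $k_nt$ is close to $\pi\mathbb Z$, this produces large responses. This is where $c$ enters: $k_n$ depends on $c$ through $c^{-1/2}$.

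\textbf{Step 3 (almost every $c$).} We need, for infinitely many $m$, a uniform lower bound $|\sin(k_{n}(m,c)t)|\ge \delta_m$ for all $n<m$, with $\delta_m$ large compared with the relevant coefficients. A Borel–Cantelli argument in $c$ handles this. For fixed $m$ and $n$, the set of $c$ in a compact interval where $|\sin(k_nt)|<\eta$ has measure $O(\eta\cdot\text{(derivative)}^{-1})$. Here $\partial_c k_n\sim k_n/c$, which is large, so the measure is about $\eta$ times the number of crossings divided by $k_n$. Summing over $n<m$ weighted by the decay of $\langle g,e_n\rangle$ shows that the bad set for each $m$ has measure tending to zero. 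Then "infinitely many good $m$" holds for a.e.\ $c$, since $\limsup$ of the good events has full measure when the measures of the bad sets tend to $0$ along a subsequence.

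This step is the main obstacle. The decay of $\langle g,e_n\rangle$ in $|n-m|$ must beat the small divisors, which essentially requires the wing geometry to produce data whose spectrum concentrates near $n=m$.

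If this direct approach fails, I would instead take the quasimode from Steps 1–2 with error $O(1)$ outside a small spectral window. I would then use a Hassell-type averaging over $c$ of spectral projections to extract good $m$. Finally, we take $u_k=q_{m_k}+w_{m_k}$ multiplied by smooth cutoffs, and verify $u_k\in H^2\cap H^1_0$ and the two limits in \eqref{eq:qm}.
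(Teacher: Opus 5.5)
There is a genuine gap, and it is exactly the step you label the main obstacle. Nothing in Steps 1--3 produces a correction $w_m$ with $\norm{w_m}\to0$ and $o(1)$ residual, because the interface problem at $\Gamma$ does not decouple mode by mode.

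First, the profile $e_m(y/\sqrt{1-x^2})\phi(x)$ is not an accurate approximate solution in $C$. On the decay scale $x\sim m^{-1/2}$ the width $\sqrt{1-x^2}$ changes by about $1/(2m)$, which is comparable to the vertical wavelength. So the $x$-derivatives falling on $e_m(y/\sqrt{1-x^2})$ are as large as those falling on $\phi$, and the ansatz leaves non-small residuals in the neighbouring modes.

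Second, those modes, and all modes $n<m$ produced by your rectangle correction, are not evanescent in $C$ at energy $E_m$: the slice energy $(n\pi)^2/(1-x^2)$ is below $E_m$ near $\Gamma$. Their traces on $\Gamma$ must therefore be continued into the cap. There the Dirichlet-to-Neumann response is generically of order $k_n$, which cancels the $1/k_n$ you gained in the rectangle, and it mixes all modes because $C$ does not separate. There is no contraction, so you are back to the full coupled transmission problem.

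Consequently the defect $g$ is not an explicit function whose coefficients you can show decay in $|n-m|$. The true small divisors are not the rectangle quantities $\sin(k_nt)$. They are the proximity of $E_m(c)$ to eigenvalues of the whole (ergodic) operator $H_c$ whose eigenfunctions overlap $q_m$.

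The fallback cannot close the gap either. An $O(1)$ quasimode only gives $\norm{(1-\Pi)q_m}^2\le C\eta^{-2}$ for the spectral projector $\Pi$ onto a window of half-width $\eta$ about $E_m$. Hassell-type averaging therefore yields eigenfunctions with some mass near the bouncing-ball family. It never gives $\norm{u_k-q_{m_k}}\to0$ together with $\norm{(H_c-(m_k\pi)^2-c\nu)u_k}\to0$.

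The missing idea is to avoid constructing the correction at all. The paper solves the exact problem $(H_c-(m\pi)^2-z)U_m=e_m\delta_\Gamma$ by Lax--Milgram. Then $v_m=q_m+c\gamma_tU_m(c,c\nu+i)$ satisfies $(H_c-E_m(c))v_m=i(v_m-q_m)$ exactly. By the energy identity $\Im g_m=(-\Im c)\norm{\partial_xU_m}^2+(\Im z)\norm{U_m}^2$, the whole error is $c^2\gamma_t^2\Im g_m(c,c\nu+i)$.

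Smallness is proved at the complex point $c=1-i$, $z=i$, where the same identity bounds $\norm{\partial_xU_m}$ with no loss in $m$. A semiclassical propagation argument then kills the mass in the cap, and the trace inequality gives $g_m(1-i,i)\to0$.

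Positivity of $\Im g_m$ makes $1+\Re\frac{g_m-i}{g_m+i}$ a bounded nonnegative harmonic function of $c$. The Poisson formula in $w=c^2$, together with Harnack's inequality in $z$, turns the decay at $c=1-i$ into $\int_I\Im g_m(c,c\nu+i)\,dc\to0$ on compact $I\subset(0,\infty)$. All resonance effects in $c$ are absorbed by this positivity and holomorphy, with no Diophantine analysis.

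Your final measure-theoretic step is sound: bad sets of measure tending to zero force a.e.\ $c$ to be good for infinitely many $m$. This is essentially how the paper concludes.
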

We stress that in Theorem \ref{t:1} (and Theorem \ref{t:ef}) we do not assert
$\norm{\varphi_k-q_{m_k}}\to0$ for individual eigenfunctions.

\noindent
{\bf Remark.}
One can also keep the physical rectangle fixed. Put $L=at$ and write
\[
S_{L,a}=\{(x,y):|y|<1,\quad |x|<L+a\sqrt{1-y^2}\}.
\]
For almost every $L>0$, the existence and concentration conclusions of
Theorems~\ref{t:1} and \ref{t:main} (and of Theorem \ref{t:ef}) hold on $S_{L,a}$ for almost every
$a>0$. Thus the rectangle $(-L,L)\times(-1,1)$ is fixed and only the
elliptical wings vary. The spectral centres are now
$ 
 E_m=(m\pi)^2+{\pi^2}/{L^2}$, independent of $a$, and the eigenvalues in Theorems~\ref{t:1} and \ref{t:ef} satisfy
$\lambda_k=E_{m_k}+o(1)$. The subsequences may depend on $a$.
Indeed, for every fixed $t>0$ the exceptional set of $c>0$ has measure
zero. After identifying the domains, the norm and spectral conditions
in the proofs are measurable in $(c,t)$, so Fubini's theorem gives a null
exceptional set of pairs.

\noindent 
{\bf Outline of the proof.} 
The functions $q_m$ solve the eigenvalue equation except for a jump of
normal derivatives at $\Gamma$. To remove it, we solve
\[
 (H_c-(m\pi)^2-z)U_m=e_m\delta_\Gamma,
 \qquad g_m(c,z): =\ip{U_m|_\Gamma}{e_m}_{L^2(\Gamma)}.
\]
For real $c>0$ and $z=c\nu+i$, the energy identity gives
$\Im\,g_m=\norm{U_m}^2$. The correction
$v_m=q_m+c\gamma_tU_m(c,c\nu+i)$ with $\gamma_t := \sqrt{2}\, \pi t^{-3/2}$ then satisfies
$(H_c-(m\pi)^2-c\nu)v_m=i(v_m-q_m)$. Thus the smallness of $ g_m $ 
is the key to obtaining a quasimode from $ q_m $. 
The bounded errors sufficient for \cite{Ha} would not give the
concentration obtained here; making them tend to zero is the main
additional difficulty.

To achieving this smallness we complexify  $c$. At $c=1-i$ and $z=i$, the energy identity
also controls the unscaled horizontal derivative $\partial_xU_m$.
With this additional control, a semiclassical propagation argument,
in the spirit of Burq--Zworski \cite{buzy05,buzy04} used to obtain
\eqref{eq:rect}, forces the mass in the cap to vanish. A trace estimate
then gives $g_m(1-i,i)\to0$.
Positivity and ``harmonic measure magic'' transfer this decay to
\[
 \int_I \Im\,g_m(c,c\nu+i)\,dc\to 0,
 \qquad I\Subset(0,\infty).
\]
Related uses of harmonic measure in spectral theory go back to
Pearson \cite{Pe}. This averaged estimate gives the required quasimodes
after selecting a subsequence, for almost every $c$.

Finally, Hassell's parameter-variation argument \cite{Ha}, applied in
shrinking spectral windows, selects exact eigenfunctions satisfying
$\lambda_k^{-1}\norm{\partial_x\varphi_k}^2\to0$. Propagation results for semiclassical defect measures from Burq \cite{Bu97} then show the concentration on the closed bouncing-ball
family.

\noindent {\sc Acknowledgements.}
In addition to the interaction described in the abstract, we also thank ChatGPT 6 for producing the 
figures. A partial support from the Simons
Foundation through Targeted Grant Award No.~896630, ``Moir\'e Materials
Magic'' is also gratefully acknowledged. 

\section{An auxiliary boundary value problem}\label{s:aux}
If we consider $q_m$ in
\eqref{eq:seed} as an element of $ H_0^1 ( \Omega ) $, the mismatch of normal 
derivatives at $ \Gamma $ gives
\begin{equation}\label{eq:defect}
 (H_c-(m\pi)^2-c\nu)q_m=-c\gamma_t e_m\delta_\Gamma, \ \ \ 
 \gamma_t: = \sqrt{2}\, \pi t^{-3/2},
\end{equation}
where $e_m\delta_\Gamma$ is the boundary functional
$ V\mapsto\ip{e_m}{V|_\Gamma}_{L^2(\Gamma)}$.
To remove this term, we first solve the problem with the source given by 
$ e_m \delta_\Gamma $, a complex spectral parameter $ z $, and complexified $ c$:
Thus, for
\begin{equation}\label{eq:par}
 \Re c>0,\qquad \Im c\leq0,\qquad
 z\in\C_+:=\{z\in\C:\Im z>0\},
\end{equation}
we let $U=U_m(c,z)\in H^1_0(\Omega)$ satisfy
\begin{equation}\label{eq:source}
 (H_c-(m\pi)^2-z)U=e_m\delta_\Gamma\quad\text{in }\Omega.
\end{equation}
The restrictions $U_{m,C}=U_m|_C$ and $U_{m,R}=U_m|_R$ solve the
homogeneous equation in their interiors, vanish on the exterior
boundary, and satisfy transmission conditions
\begin{equation}\label{eq:jump}
 \begin{gathered}
 U_{m,R}|_\Gamma=U_{m,C}|_\Gamma,\qquad
 c\partial_xU_{m,R}|_\Gamma-c\partial_xU_{m,C}|_\Gamma=e_m.
 \end{gathered}
\end{equation}
The conormal derivatives are understood weakly. We define the following crucial object 
which is the $ e_m $ component of the restriction to $ \Gamma$:
\begin{equation}\label{eq:g}
 g_m(c,z)=\ip{U_m|_\Gamma}{e_m}_{L^2(\Gamma)}.
\end{equation}

To show that this transmission problem has a solution, we rephrase it
weakly using a quadratic form:
\begin{equation}\label{eq:weak}
 \begin{split}
 c\ip{\partial_xU}{\partial_xV}+\ip{\partial_yU}{\partial_yV}
       -((m\pi)^2+z)\ip UV
   =\ip{e_m}{V|_\Gamma}_{L^2(\Gamma)},
 \end{split}
\end{equation}
for all $V\in H^1_0(\Omega)$. Inner products are linear in the first
factor; norms and inner products without a specified domain are over
$\Omega$.

\begin{lemma}\label{l:pos}
For the parameters in \eqref{eq:par}, problem \eqref{eq:weak} has a
unique solution. The functions $U_m(c,z)$ and $g_m(c,z)$ are holomorphic on
\[
 \{(c,z)\in\C^2:\Re c>0,\ \Im c<0,\ \Im z>0\},
\]
and extend holomorphically to a neighborhood of each $(c_0,z_0)$ with
$c_0>0$ real and $z_0\in\C_+$. Moreover, for $ g_m $ defined in \eqref{eq:g}, 
\begin{equation}\label{eq:pos}
 \Im g_m(c,z)=(-\Im c)\norm{\partial_xU_m(c,z)}^2
                  +(\Im z)\norm{U_m(c,z)}^2\geq0.
\end{equation}
In particular, for fixed real $c>0$, the function
$z\mapsto g_m(c,z)$ is holomorphic on $\C_+$ and has nonnegative
imaginary part there.
\end{lemma}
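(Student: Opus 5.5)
The plan is to apply Lax--Milgram to the sesquilinear form
\[
 B(U,V)=c\ip{\partial_xU}{\partial_xV}+\ip{\partial_yU}{\partial_yV}-((m\pi)^2+z)\ip UV
\]
on $H^1_0(\Omega)$. The right-hand side $V\mapsto\ip{e_m}{V|_\Gamma}_{L^2(\Gamma)}$ is a bounded antilinear functional by the trace theorem on the interior segment $\Gamma$.

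The key step is coercivity. Consider
\[
 -\Im B(U,U)=(-\Im c)\norm{\partial_xU}^2+(\Im z)\norm U^2 .
\]
Both terms are nonnegative under \eqref{eq:par}, but this alone does not control $\norm{\nabla U}^2$. I would therefore rotate: choose $\theta$ with $|\theta|$ small and look at $\Re(e^{-i\theta}B(U,U))$. Write $c=|c|e^{i\alpha}$ with $\alpha\in(-\pi/2,0]$, and pick $\theta$ slightly negative, lying strictly between $\alpha$ and $0$ when $\alpha<0$ (or slightly negative when $\alpha=0$). Then $\Re(e^{-i\theta}c)>0$ and $\cos\theta>0$, so the $\partial_x$ and $\partial_y$ terms are coercive. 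For the zeroth-order term, $-\Re(e^{-i\theta}((m\pi)^2+z))$ is bounded below by $\delta>0$ times $\Im z$ once $\theta$ is small. The $(m\pi)^2$ part is the obstacle: for $\theta<0$, $-\Re(e^{-i\theta}w)=-\cos\theta\,\Re w+\sin\theta\,\Im w$ is negative for positive real $w$. So direct rotation fails, and I would fall back to a Gårding plus uniqueness argument instead.

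The fallback runs as follows. $B$ is $(\partial_x,\partial_y)$-elliptic up to a compact $L^2$ perturbation, because $\Re c>0$ and $H^1_0\hookrightarrow L^2$ is compact. The operator $H^1_0\to H^{-1}$ is therefore Fredholm of index zero: it equals an invertible operator (take $B+M\ip{\cdot}{\cdot}$ rotated appropriately) plus a compact one. Injectivity follows from the identity above. If $B(U,V)=0$ for all $V$, then $0=-\Im B(U,U)\ge(\Im z)\norm U^2$, so $U=0$ since $\Im z>0$. This gives existence and uniqueness.

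For holomorphy, the operator family $A(c,z)\colon H^1_0\to H^{-1}$ depends affinely, hence holomorphically, on $(c,z)$, so $A^{-1}$ is holomorphic wherever it exists. The same Fredholm argument works for any complex $c$ with $\Re c>0$. The injectivity argument needs only $\Im c\le0$ and $\Im z>0$, which hold at the points $(c_0,z_0)$ with $c_0>0$ real. Invertibility is an open condition, so it persists on a complex neighborhood of each such point, including points with $\Im c>0$. Hence $U_m=A^{-1}(e_m\delta_\Gamma)$ and $g_m=\ip{U_m|_\Gamma}{e_m}$ are holomorphic there; $g_m$ is conjugate-linear in nothing that depends on the parameters, since $e_m$ is fixed.

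Finally, for \eqref{eq:pos}, test \eqref{eq:weak} with $V=U$. The right side is $\ip{e_m}{U|_\Gamma}=\overline{g_m}$. The left side has imaginary part $(\Im c)\norm{\partial_xU}^2-(\Im z)\norm U^2$. Taking imaginary parts gives $-\Im g_m=(\Im c)\norm{\partial_xU}^2-(\Im z)\norm U^2$, which is \eqref{eq:pos}. Its nonnegativity on $\C_+$ for real $c>0$ then yields the last claim.
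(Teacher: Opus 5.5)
Your proof is correct. The energy identity \eqref{eq:pos} and the holomorphy/openness step are the same as in the paper; the difference is how you get existence and uniqueness.

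The paper gets direct coercivity by multiplying the form by $1+iM$, with $M$ large depending on $m,c,z$:
\[
 \Re\big((1+iM)B(U,U)\big)=(\Re c-M\Im c)\norm{\partial_xU}^2+\norm{\partial_yU}^2+(M\Im z-(m\pi)^2-\Re z)\norm U^2 .
\]
This is coercive on $H^1_0(\Omega)$ because $\Im c\le0$ and $\Im z>0$, so Lax--Milgram applies at once. It is exactly the rotation you discarded. Rotation fails only for small angles, while $1+iM$ rotates by nearly $\pi/2$ ($\theta=-\arctan M$ in your notation). Your dismissal also has a sign slip: the correct formula is $-\Re(e^{-i\theta}w)=-\cos\theta\,\Re w-\sin\theta\,\Im w$. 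So for $\theta<0$ the $\Im z$ term helps, and it dominates $(m\pi)^2+\Re z$ once $|\tan\theta|$ is large enough. Meanwhile $\cos\theta>0$ and $\Re(e^{-i\theta}c)=\cos\theta\,\Re c+\sin\theta\,\Im c>0$ still hold for every $\theta\in(-\pi/2,0)$.

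Your fallback is sound nonetheless. It combines a G\aa rding inequality with the compact embedding $H^1_0(\Omega)\hookrightarrow L^2(\Omega)$ to get an index-zero Fredholm operator, and gets injectivity from $-\Im B(U,U)\ge(\Im z)\norm U^2$.

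As for what each route buys: the paper's argument is shorter and needs neither compactness nor Fredholm theory. Yours uses that $\Omega$ is bounded. In exchange, it shows that invertibility is governed only by the same energy identity that gives \eqref{eq:pos}. It would also apply at any $(c,z)$ with $\Re c>0$ where injectivity can be checked, not only where some rotated form is coercive.
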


\noindent
{\bf Remarks.} 1. We stress that no uniformity in $ m $ is asserted in the conclusions of the lemma. It is an abstract functional analysis argument.

\noindent 2. The energy identity \eqref{eq:pos} explains the significance $ g_m $ defined in \eqref{eq:g}. For $ c > 0 $
(the physically interesting case of billiards) and $z=c\nu+i$, the identity $\Im\,g_m(c,z)=\norm{U_m(c,z)}^2$ shows that $\Im\,g_m\ll1$ implies $\|U_m(c,z)\|\ll1$. That means that the correction to \eqref{eq:defect}
$ r_m := c \gamma_t U_m ( c, z ) $ would be small and and
$ q_m + r_m $ would produce a quasimode.

\begin{proof}[Proof of Lemma \ref{l:pos}]
The right hand side of \eqref{eq:weak},
$V\mapsto\ip{e_m}{V|_\Gamma}_{L^2(\Gamma)}$, is a continuous antilinear
functional on $H^1_0(\Omega)$.

If the left hand side of \eqref{eq:weak} is denoted by $B(U,V)$, then for a sufficiently
large positive $M$, depending on $m,c,z$,
\[
 \begin{split}
 \Re\big((1+iM)B(U,U)\big)
  &=(\Re c-M\Im c)\norm{\partial_xU}^2+\norm{\partial_yU}^2\\
  &\ \ \ \ \  +(M\Im z-(m\pi)^2-\Re z)\norm U^2
 \end{split}
\]
is coercive on $H^1_0(\Omega)$.
Consequently the Lax--Milgram theorem (see for instance
\cite[\S 6.2.1, Theorem 1]{Evans}) gives existence and
uniqueness.
The form $B$ defines a bounded operator
$\mathcal B(c,z):H^1_0(\Omega)\to H^{-1}(\Omega)$ by
$\bigl(\mathcal B(c,z)U\bigr)(V)=B(U,V)$, where $H^{-1}(\Omega)$
is the continuous antilinear dual of $H^1_0(\Omega)$.
It depends holomorphically, in operator norm, on $c,z$ and so does the
inverse guaranteed by the Lax--Milgram argument. Writing
\[
 U_m(c,z)=\mathcal B(c,z)^{-1}(e_m\delta_\Gamma),
\]
we see that $(c,z)\mapsto U_m(c,z)$ is holomorphic with values in
$H^1_0(\Omega)$.
Applying the continuous linear functional
$U\mapsto\ip{U|_\Gamma}{e_m}_{L^2(\Gamma)}$ gives the same conclusion
for $g_m$. The invertibility holds for $c,z$ satisfying
\eqref{eq:par} and hence the holomorphy holds in a neighbourhood of
any such point.

Taking $V=U$ in \eqref{eq:weak} gives $B(U,U)=\bar g_m$, from which
\eqref{eq:pos} follows.
\end{proof}

\section{The complex-parameter estimate}\label{s:cx}
We now prove that the boundary coefficient \eqref{eq:g} tends to zero
at one fixed pair of complex parameters:
\begin{equation}\label{eq:dec}
 g_m(1-i,i)\to0,\qquad m\to\infty.
\end{equation}
This decay and ``harmonic measure magic" will be used to obtain an average decay of $\Im\,g_m(c,c\nu+i)$ for the relevant positive values of $ c$.
The particular values $1-i$ and $i$ in \eqref{eq:dec} are inessential:
any fixed pair with $\Re c>0$, $\Im c<0$, $\Im z>0$ works, with
constants depending on that pair.


\subsection{Uniform bounds without a high-frequency loss}
\label{s:unifb}
For $u\in H^1(C)$ vanishing on the circular arc, $ x = \sqrt{ 1 - y^2 }$, integration in $ x $ over $ 0 < x< \sqrt{1- y^2 } $
gives
\begin{equation}\label{eq:tr}
 \begin{split}
 \norm{u|_\Gamma}_{L^2(\Gamma)}^2
  &=-2\Re\int_C \partial_xu\,\overline ud xd y
  \leq2\norm u_{L^2(C)}\norm{\partial_xu}_{L^2(C)}.
 \end{split}
\end{equation}
In this section we write $U_m=U_m(1-i,i)$ and $g_m=g_m(1-i,i)$. Using the definition \eqref{eq:g}, formula \eqref{eq:pos}, and the estimate \eqref{eq:tr} gives
\begin{equation}\label{eq:bnd}
 \begin{split}
 |g_m|^2
 &\leq\norm{U_{m,C}|_\Gamma}_{L^2(\Gamma)}^2
\leq\norm{U_{m,C}}_{L^2(C)}^2+
          \norm{\partial_xU_{m,C}}_{L^2(C)}^2\\
 &\leq\norm{U_m}_{L^2(\Omega)}^2+
          \norm{\partial_xU_m}_{L^2(\Omega)}^2
   =\Im g_m\leq |g_m|,
 \end{split}
\end{equation}
We conclude that
\begin{equation}\label{eq:unif}
 |g_m|\leq1,\qquad
 \norm{U_m}_{L^2(\Omega)}^2+
 \norm{\partial_xU_m}_{L^2(\Omega)}^2\leq1.
\end{equation}
Taking the real part of \eqref{eq:weak}, with $V=U_m$, also gives
\begin{equation}\label{eq:yb}
 \begin{split}
 &(m\pi)^{-2}\big(\norm{\partial_yU_{m,C}}_{L^2(C)}^2+
                    \norm{\partial_yU_{m,R}}_{L^2(R)}^2\big)\\
 &\quad=\norm{U_m}_{L^2(\Omega)}^2+(m\pi)^{-2}\Re g_m
        -(m\pi)^{-2}\norm{\partial_xU_m}_{L^2(\Omega)}^2
        \leq 2, 
 \end{split}
\end{equation}
where we used \eqref{eq:unif} to get the estimate. 
We note that the estimates on $ \partial_x U_m $, unlike those on $ \partial_y U_m $, do not involve any loss when $ m \to \infty $. That is what we mean by the title of this subsection.

\subsection{Cap compactness: a semiclassical argument}
We use two integrations by parts and weak limits of
$|u_m|^2d xd y$ to pass from the estimates in \S\ref{s:unifb}
to decay in the cap $C$. This gives the following lemma.

\begin{lemma}\label{l:cap}
Suppose $u_m\in H^1(C)$ vanishes on the arc and on $y=0$, satisfies
\begin{equation}\label{eq:cap}
 (-(1-i)\partial_x^2-\partial_y^2-(m\pi)^2-i)u_m=0\quad\text{in }C,
\end{equation}
and, for a constant $K$ independent of $m$,
\[
 \norm{u_m}_{L^2(C)}+\norm{\partial_xu_m}_{L^2(C)}
       +\norm{(m\pi)^{-1}\partial_yu_m}_{L^2(C)}\leq K,
 \qquad \norm{u_m|_\Gamma}_{L^2(\Gamma)}\leq K.
\]
Then 
\[ \norm{u_m}_{L^2(C)}\to  0 . \]
\end{lemma}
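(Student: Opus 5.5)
The plan is to work semiclassically with $h=(m\pi)^{-1}$ and use two real multiplier identities (Rellich/Morawetz type). The strip near $\Gamma$ is handled separately using the $\partial_x$ bound and the trace bound. Multiplying \eqref{eq:cap} by $h^2$ gives
\[
 \bigl(-(1-i)h^2\partial_x^2-h^2\partial_y^2-1-ih^2\bigr)u_m=0 .
\]
The hypotheses say $\norm{h\partial_xu_m}_{L^2(C)}=O(h)$, so every term involving $\partial_x$ (whose coefficient is the only complex one) contributes $O(h)$ or $O(h^2)$.

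\emph{Step 1: energy equipartition and the strip near $\Gamma$.} I pair the equation with $\bar u_m$ and take real parts. The boundary term at $\Gamma$ is $h^2\Re\bigl((1-i)\ip{\partial_xu_m|_\Gamma}{u_m|_\Gamma}_{L^2(\Gamma)}\bigr)$, and I expect it to be $o(1)$ after localising with a cutoff in $x$ (see Step 3). The remaining terms give
\[
 \int_C\bigl(|h\partial_yu_m|^2-|u_m|^2\bigr)\to0
\]
on any region $\{x>\delta\}$. Near $\Gamma$ I use a one-dimensional Sobolev estimate in $x$. Since $\sup_x\norm{u_m(x,\cdot)}_{L^2_y}^2\le \norm{u_m|_\Gamma}^2+2\norm{u_m}\norm{\partial_xu_m}\le 3K^2$, the mass in $C\cap\{x<\delta\}$ is at most $3K^2\delta$. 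It therefore suffices to show $\norm{u_m}_{L^2(C\cap\{x>\delta\})}\to0$ for each fixed $\delta>0$.

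\emph{Step 2: the $x\partial_x$ multiplier.} I compute $\Re\ip{h^2P u_m}{\chi(x)\,x\partial_xu_m}_{L^2(C)}$, with $\chi=1$ on $x>\delta$, $\chi=0$ near $x=0$. On the arc $n=(x,y)$, and since $u_m=0$ there, $\nabla u_m=(\partial_nu_m)n$. Integrating the $-h^2\partial_y^2$ and $-1$ terms by parts in the standard way gives
\[
 \tfrac12\int\chi\bigl(|u_m|^2-|h\partial_yu_m|^2\bigr)
 -\tfrac12\int_{\rm arc}\chi\,x\,n_x\,n_y^2|h\partial_nu_m|^2 .
\]
All remaining terms are $O(h)$ because they contain $h\partial_xu_m$ or $\chi'$ multiplied by bounded quantities. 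By Step 1 the first integral is $o(1)$. Hence
\[
 \int_{\rm arc}\chi\,x^2y^2|h\partial_nu_m|^2\to0 .
\]

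\emph{Step 3: the $y\partial_y$ multiplier.} Next I compute $\Re\ip{h^2Pu_m}{\chi(x)\,y\partial_yu_m}$. Because of the weight $y$, the Dirichlet side $y=0$ contributes nothing. The interior terms give
\[
 \tfrac12\int\chi\bigl(|u_m|^2+|h\partial_yu_m|^2\bigr)+O(h),
\]
and this equals the arc flux $\tfrac12\int_{\rm arc}\chi\,y\,n_y^3|h\partial_nu_m|^2=\tfrac12\int_{\rm arc}\chi\,y^4|h\partial_nu_m|^2$ (up to sign conventions, which I will check). On $\supp\chi\cap{\rm arc}$ we have $x\ge\delta$, so $y^4\le y^2\le\delta^{-2}x^2y^2$. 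Step 2 then forces this flux to zero, and hence $\int_{x>\delta}|u_m|^2\to0$. Combined with Step 1 this gives $\norm{u_m}_{L^2(C)}\to0$.

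\emph{Main obstacle.} The delicate point is justifying the integrations by parts and controlling the error terms. First, $h\partial_nu_m$ on the arc must be meaningful and bounded; I will get this from a standard Rellich identity, or from $H^2$ regularity away from corners, since $\chi$ vanishes near the corner at $\Gamma$. Second, I must confirm that every cross term carrying $(1-i)h^2\partial_x^2$ or $\chi'$ is genuinely $O(h)$ using only the assumed bounds. The term $h^2\Re\ip{(1-i)\partial_x^2u_m}{\chi x\partial_xu_m}$ integrates to $h^2\int(\chi x)'|\partial_xu_m|^2$ plus an arc term $h^2\int\chi x n_x|\partial_nu_m|^2 n_x^2$. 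These are $O(h^2)$ and $o(1)$ respectively, provided the arc flux of $h\partial_nu_m$ is bounded, which closes the argument.
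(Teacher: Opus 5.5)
Your multiplier route ($\chi(x)x\partial_x$ and $\chi(x)y\partial_y$, with a cutoff that reaches the arc) differs from the paper's. As written, Steps 2--3 have a genuine gap: the complex coefficient produces terms you have not controlled.

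Since $\Re\ip{(1-i)f}{g}=\Re\ip{f}{g}+\Im\ip{f}{g}$, the $\partial_x^2$ term in Step 2 gives more than $\tfrac{h^2}{2}\int(\chi x)'|\partial_xu_m|^2$ and an arc term. It also gives the non-divergence quantity $-h^2\Im\int_C\chi x\,\partial_x^2u_m\,\overline{\partial_xu_m}$. Step 3 similarly produces $h^2\Im\int_C\chi y\,\partial_xu_m\,\overline{\partial_x\partial_yu_m}$.

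These terms involve $h\partial_x^2u_m$ and $h\partial_x\partial_yu_m$ up to the arc, and your hypotheses do not bound them. Near-arc elliptic regularity gives only $h^2\partial^2u_m=O(1)$ in $L^2$, so the terms are a priori $O(1)$. The paper's bound on $hu_{xy}$ holds only on compact subsets of $D$.

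Your claim that the arc term $\int_{\mathrm{arc}}\chi x\,n_x^3|h\partial_nu_m|^2$ is $o(1)$ is also wrong. On the arc, $h\partial_xu_m=n_x\,h\partial_nu_m$ is exactly of flux size. This error is harmless, since the term has the same sign as the $\partial_y^2$ arc term and combines with it into the full flux.

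Done correctly, and using Step 1 with weight $(\chi x)'$, the $x\partial_x$ identity reads
\[
 \tfrac12\int_{\mathrm{arc}}\chi x^2|h\partial_nu_m|^2\,ds
 =-h^2\Im\int_C\chi x\,\partial_x^2u_m\,\overline{\partial_xu_m}\,dxdy+O(h^2).
\]
So Step 2 is equivalent to showing that the right side tends to zero, and the proposal gives no argument for that. The ``standard Rellich identity'' you plan to use to bound the flux has the same defect.

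The gap can be closed with an extra step. Take $\tilde\chi(x)$ vanishing near $x=0$ and integrate $\int\tilde\chi^2|\partial_x^2u_m|^2$ by parts once in $x$. Then use:
\begin{itemize}
\item the crude near-arc bounds $\norm{\tilde\chi u_m}_{H^k}\lesssim h^{-k}$ for $k=2,3$;
\item the multiplicative trace inequality;
\item the hypothesis $\norm{\partial_xu_m}\le K$.
\end{itemize}
This gives $\norm{\tilde\chi\partial_x^2u_m}=O(h^{-7/4})$, and the same argument works for $\partial_x\partial_yu_m$. Both imaginary terms are then $O(h^{1/4})$. With the corrected arc terms (Step 3 then has flux $\tfrac12\int_{\mathrm{arc}}\chi y^2|h\partial_nu_m|^2$), your argument closes.

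The paper avoids boundary fluxes altogether. Its multipliers $\psi\overline u$ and $2\varphi\overline{u_y}$ are compactly supported in $D$, where the interior bound on $hu_{xy}$ suffices. This gives $\int\varphi_y|u|^2=O(h)$, hence vertical invariance $d\rho=d\sigma(x)\,dy$ of any limit measure. The Dirichlet condition enters only through the one-dimensional estimate $\int_{\mathcal A_\varepsilon}|u|^2\le C\varepsilon^2$ in the horizontal collar. The collar's vertical slices have length at least $\delta\varepsilon$, which forces $\sigma=0$.
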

\begin{proof}
Odd reflection of $u_m$ across $y=0$ extends it to $H^1(D)$, where
\begin{equation}\label{eq:D}
 D=\{(x,y):x>0,\ x^2+y^2<1\}.
\end{equation}
The extension solves \eqref{eq:cap} in $D$ and vanishes on the circular
arc.
We put $h=(m\pi)^{-1}$ and $u=u(h)=u_m$ for the extension.
The norms in this proof are over $D$ and the restrictions to $x=0$ are
measured in $L^2((-1,1)_y)$. The equation is
\begin{equation}\label{eq:sc}
 -h^2u_{yy}-u=(1-i)h^2u_{xx}+ih^2u,
 \qquad \norm u+\norm{u_x}+\norm{hu_y}\leq C.
\end{equation}
We first show that every weak limit of $|u(h)|^2d xd y$ is
constant along vertical segments. This uses two integrations by parts,
in the spirit of the control argument of 
\cite{buzy05};
see also \cite[\S 6.1]{buzy04}. 

We will use the local bound $
 \norm{hu_{xy}}_{L^2(K)}\leq C_K$, $K\Subset D$. This 
follows from the interior elliptic regularity applied to $ u_x $, which satisfies the same equation as $ u$
-- see for instance \cite[Theorem 7.1]{Zw}. 

For real $\psi\in C_c^\infty(D)$, multiplying \eqref{eq:sc} by
$\psi\overline u$, integrating by parts, and taking real parts gives 
\begin{equation}\label{eq:bal}
 \begin{split}
 \int_D\psi\bigl(h^2|u_y|^2-|u|^2\bigr)d xd y
 &=\tfrac12{h^2}\int_D\psi_{yy}|u|^2d xd y
       -h^2\int_D\psi|u_x|^2d xd y\\
 &\quad \ \ \ -h^2\Re\int_D(1-i)\psi_xu_x\overline ud xd y
   =O_\psi(h^2).
 \end{split}
\end{equation}
For real $\varphi\in C_c^\infty(D)$, we next multiply \eqref{eq:sc} by $2\varphi\overline{u_y}$, so that 
integration in $y$ on the left and in $x$ on the right yields
\begin{equation}\label{eq:trans}
 \begin{split}
 \int_D\varphi_y\bigl(h^2|u_y|^2+|u|^2\bigr)d xd y
 &=-2h^2\Re\int_D(1-i)\bigl(\varphi_xu_x\overline{u_y}
                 +\varphi u_x\overline{u_{xy}}\bigr)d xd y\\
 &\quad \ \ \ +2h^2\Re\int_D i\varphi u\overline{u_y}d xd y
   =O_\varphi(h).
 \end{split}
\end{equation}
The last estimate uses \eqref{eq:sc} and the local bound on $hu_{xy}$.
Subtracting \eqref{eq:bal}, with $ \psi = \varphi_y $, from \eqref{eq:trans} gives
\[
 \int_D\varphi_y|u(h)|^2d xd y=O_\varphi(h).
\]
After passage to a subsequence, the bounded measures
$|u(h)|^2d xd y$ converge weakly on compact subsets of $D$ to a
nonnegative measure $\rho$. The preceding estimate gives
$\partial_y\rho=0$, in the sense of distributions. Thus
\begin{equation}\label{eq:prod}
 d\rho(x,y)=d\sigma(x)d y,\qquad
 -L(x)<y<L(x),\qquad L(x)=\sqrt{1-x^2},
\end{equation}
where $\sigma$ is a nonnegative, locally finite measure on $(0,1)$.
Indeed, $\partial_y\rho=0$ gives this product form on interior
rectangles -- see \cite[Theorem 3.1.4']{H1}

We next exclude mass in a shrinking boundary layer. Set
$s(y)=\sqrt{1-y^2}$ and denote the horizontal collar of the arc by
\[
 \mathcal A_\varepsilon
   =\{(x,y)\in D:0<s(y)-x<\varepsilon\}.
\]
For fixed $y$, apply the fundamental theorem of calculus to $u_m$
itself, using $u(s(y),y)=0$, and then Cauchy--Schwarz:
\[
 |u(x,y)|^2
   =\left|\int_x^{s(y)}\partial_x u (r,y)d r\right|^2
   \leq(s(y)-x)\int_x^{s(y)}|\partial_x u(r,y)|^2d r.
\]
Integrating $x$ from $\max(0,s(y)-\varepsilon)$ to $s(y)$ gives
\[
 \int_{\max(0,s(y)-\varepsilon)}^{s(y)}|u (x,y)|^2d x
 \leq\frac{\varepsilon^2}{2}
       \int_0^{s(y)}|\partial_xu (r,y)|^2d r.
\]
Thus the factor $\varepsilon^2$ comes from the $x$ integration;
$y$ still ranges over $(-1,1)$. Integrating in $y$ proves
\begin{equation}\label{eq:arc}
 \int_{\mathcal A_\varepsilon}|u|^2d xd y
 \leq\frac{\varepsilon^2}{2}
       \norm{\partial_x u}_{L^2(D)}^2\leq C\varepsilon^2.
\end{equation}
Similarly, $u(x,y)=u(0,y)+\int_0^x\partial_xu (r,y)d r$
gives
\begin{equation}\label{eq:strip}
 \begin{split}
 \int_{\{0<x<\varepsilon\}\cap D}|u|^2d xd y
 &\leq2\varepsilon\norm{u|_{x=0}}_{L^2(-1,1)}^2
       +\varepsilon^2\norm{\partial_x u}_{L^2(D)}^2
       \leq C\varepsilon.
 \end{split}
\end{equation}
Together these bounds exclude loss of $L^2$ mass at $\partial D$.

\begin{figure}[tbp]
\centering
\begin{tikzpicture}[x=2.3cm,y=2.3cm,>=Stealth,font=\small]
 \begin{scope}
  \path[fill=black!12] (0,-1)
    plot[domain=-90:90,samples=100] ({cos(\x)},{sin(\x)})
    -- (0,1)
    plot[domain=90:-90,samples=100] ({max(0,cos(\x)-0.20)},{sin(\x)}) -- cycle;
  \draw[thick] (0,-1) arc[start angle=-90,end angle=90,radius=1];
  \draw[thin] (0,-1)--(0,1);
  \draw[densely dashed] plot[domain=-78.463:78.463,samples=90]
        ({cos(\x)-0.20},{sin(\x)});
  \node at (.26,-.17) {$D$};
  \node[left] at (0,0) {$x=0$};
  \node[left] at (0,1) {$1$};
  \node[left] at (0,-1) {$-1$};
  \draw[densely dotted] (0,.44)--(.897998,.44);
  \draw[<->,semithick] (.697998,.44)--(.897998,.44);
  \node[above] at (.797998,.46) {$\varepsilon$};
  \node[left] at (0,.44) {$y$};
  \node[above right] at (.897998,.44) {$(s(y),y)$};
  \node at (.50,1.22) {Horizontal slice};
 \end{scope}
 \begin{scope}[xshift=7.0cm]
  \path[fill=black!12] (0,-1)
    plot[domain=-90:90,samples=100] ({cos(\x)},{sin(\x)})
    -- (0,1)
    plot[domain=90:-90,samples=100] ({max(0,cos(\x)-0.20)},{sin(\x)}) -- cycle;
  \draw[thick] (0,-1) arc[start angle=-90,end angle=90,radius=1];
  \draw[thin] (0,-1)--(0,1);
  \draw[densely dashed] plot[domain=-78.463:78.463,samples=90]
        ({cos(\x)-0.20},{sin(\x)});
  \draw[densely dotted] (.5,-1.04)--(.5,.866025);
  \draw[line width=1.6pt] (.5,.714143)--(.5,.866025);
  \draw[line width=1.6pt] (.5,-.866025)--(.5,-.714143);
  \fill (.5,.866025) circle[radius=.018];
  \fill (.5,.714143) circle[radius=.018];
  \draw[thin] (.51,.866025)--(.98,1.00);
  \node[right] at (.98,1.00) {$L(x)$};
  \draw[thin] (.51,.714143)--(.98,.65);
  \node[right] at (.98,.65) {$L(x+\varepsilon)$};
  \draw[thin] (.51,-.714143)--(.98,-.64);
  \node[right] at (.98,-.64) {$-L(x+\varepsilon)$};
  \draw[thin] (.51,-.866025)--(.98,-1.0);
  \node[right] at (.98,-1.0) {$-L(x)$};
  \node[below] at (.5,-1.04) {$x$};
  \node at (.58,1.22) {Vertical slice};
 \end{scope}
\end{tikzpicture}
\caption{The same collar $\mathcal A_\varepsilon$ (shaded) in the
half-disk $D$. Its horizontal width is at most $\varepsilon$.
At a fixed $x$ away from the endpoints, its vertical slice consists of
the two thick segments, of total length
$2(L(x)-L(x+\varepsilon))$.}
\label{fig:collar}
\end{figure}

Fix $0<\delta<1/2$, put $K_\delta=[\delta,1-\delta]$, and take
$0<\varepsilon<\delta/2$. For $x\in K_\delta$, the condition
$0<s(y)-x<\varepsilon$ is equivalent to
$
 L(x+\varepsilon)<|y|<L(x)
$
(these are the upper and lower segments in Figure~\ref{fig:collar}).
Moreover,
\[
 L(x)-L(x+\varepsilon)
   =\int_x^{x+\varepsilon}\frac{r}{\sqrt{1-r^2}}d r
   \geq\delta\varepsilon.
\]
To pass \eqref{eq:arc} to $\rho$, first test with compactly supported
nonnegative functions in $\mathcal A_\varepsilon$ and then increase
them to its indicator. Using \eqref{eq:prod}, we obtain
\[
 \begin{split}
 C\varepsilon^2
 &\geq\rho(\mathcal A_\varepsilon)
\geq\int_{K_\delta}
  \left(\int_{-L(x)}^{-L(x+\varepsilon)}d y
              +\int_{L(x+\varepsilon)}^{L(x)}d y\right)d\sigma(x)\\
 &=2\int_{K_\delta}\big(L(x)-L(x+\varepsilon)\big)d\sigma(x)
 \geq2\delta\varepsilon\,\sigma(K_\delta).
 \end{split}
\]
Letting $\varepsilon\to 0+$ gives $\sigma(K_\delta)=0$.
Since $\delta$ is arbitrary, $\rho=0$. The bounds
\eqref{eq:arc}--\eqref{eq:strip} then give
$\norm{u (h) }_{L^2(D)}\to0$, $ h \to 0 $: outside these two collars one has a compact
subset of $D$, where the limiting mass is zero. Every subsequence with
a nonzero limiting norm would give the same contradiction.
\end{proof}

Apply Lemma~\ref{l:cap} to $U_{m,C}(1-i,i)$. Its equation in the cap
follows from \eqref{eq:weak}, and its bounds from
\eqref{eq:bnd}--\eqref{eq:yb}. Hence
\[
 \norm{U_{m,C}(1-i,i)}_{L^2(C)}\to0.
\]
The trace inequality \eqref{eq:tr} now gives
\[
 |g_m(1-i,i)|^2
 \leq2\norm{U_{m,C}(1-i,i)}_{L^2(C)}
        \norm{\partial_xU_{m,C}(1-i,i)}_{L^2(C)}\to0.
\]
Finally, \eqref{eq:pos} yields
\[
 \norm{U_m(1-i,i)}_{L^2(\Omega)}^2+
 \norm{\partial_xU_m(1-i,i)}_{L^2(\Omega)}^2
   =\Im g_m(1-i,i)\to0.
\]
This last identity includes both the cap and the rectangle components.

\section{Harmonic measure and real parameters}\label{s:harm}
The positivity of $ \Im g_m $ (see \eqref{eq:pos}) lets us convert $c \mapsto g_m( c, i ) $ to a bounded holomorphic function using 
the map $g\mapsto(g-i)/(g+i)$
from the upper half-plane to the unit disk. Thus for
$ \Re c > 0 $, $ \Im c <  0 $ we 
put 
\begin{equation}\label{eq:h}
\begin{gathered}
 h_m ( c ) :=1+\re\frac{g_m ( c , i ) -i}{g_m ( c, i ) +i}
   =\frac{2(|g_m ( c, i)|^2+\im g_{m}(c,i) )}{|g_m ( c, i ) +i|^2} , \\
0 \leq h_m ( c ) \leq 2  , \ \ \ \  h_m ( 1 - i ) \to 0 , \ m \to 
   \infty ,
   \end{gathered} 
\end{equation}
where the last statement follows from \eqref{eq:dec}.

The Poisson formula for the lower half-plane $ \Im w <  0 $
evaluated at $ w = - 2i $ is given by 
\[
 F(-2i)=\int_{\R}\frac{2}{\pi(\xi^2+4)}F^*(\xi)d\xi
\]
where $ F $ is a bounded harmonic function on $\im w<0$, and where $F^*$ is its
almost-everywhere defined boundary value -- see for instance
\cite[Chapter~1]{HK}.
The map $w=c^2$ sends $\re c>0$, $\im c<0$ onto $\im w<0$ and
sends $1-i$ to $-2i$. Hence, the change of variables gives
\[
 h_m(1-i)=\int_0^\infty P(s)h_m(s)d s
          +\int_0^\infty P(s)h_m^*(-is)d s.
\]
where 
\begin{equation}\label{eq:P}
 P(s)=\frac{2}{\pi(s^4+4)}\,2s
     =\frac{4s}{\pi(s^4+4)},\qquad s>0.
\end{equation}
We should stress that 
the second term uses boundary limits, not solutions of \eqref{eq:weak} at $\re c=0$. 

Since the boundary values, $  h_m^* ( s ) = h_m ( s )  $
($ g_m $ is defined for $ \Re c >0$, $ \Im c = 0 $ -- see
Lemma \ref{l:pos}) and $ h_m^* ( -i s ) $, $ s > 0 $, are
nonnegative, dropping the second integral gives
\begin{equation}\label{eq:avg}
 \int_0^\infty P(s) h_m ( s) d  s
 \leq  h_m(1-i) \to 0.
\end{equation}
For a compact interval $I\Subset(0,\infty)$, the continuous function
$P$ has a positive minimum on $I$. Hence
\begin{equation}\label{eq:loc}
 \int_I h_m(c) d c \to 0.
\end{equation}

We upgrade this estimate slightly with a modified 
spectral parameter:
\begin{lemma}\label{l:avg}
For every compact interval $I\Subset(0,\infty)$ there is a constant
$C_I$, independent of $m$, such that for, 
\begin{equation}
\label{eq:defHg} \mathcal H ( g ) := 1 + \re \frac{ g - i }{g+i }, \ \ \ \  \nu > 0 ,  \end{equation}
\begin{equation}\label{eq:eng}
 \begin{split}
 \int_I \mathcal H (g_m(c, c\nu + i ) ) d c
 &\leq C_I\int_I \mathcal H( g_m (c, i )) d c
 \leq C_I \mathcal H ( g_m (1-i,i))  \to0.
 \end{split}
\end{equation}
\end{lemma}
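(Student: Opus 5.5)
The plan is to compare, for each fixed real $c\in I$, the values of a single positive harmonic function of $z$ at the two points $z=i$ and $z=c\nu+i$. Then integrate in $c$, and use \eqref{eq:avg}--\eqref{eq:loc} for the second inequality in \eqref{eq:eng}.

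\emph{Step 1: positive harmonic function in $z$.} Fix real $c>0$. By Lemma~\ref{l:pos}, $z\mapsto g_m(c,z)$ is holomorphic on $\C_+$ with $\Im g_m\ge0$. The map $g\mapsto (g-i)/(g+i)$ sends the closed upper half-plane (minus $-i$) into the closed unit disk. Hence $u_c(z):=\mathcal H(g_m(c,z))$ is the real part of a holomorphic function plus $1$, so it is harmonic on $\C_+$ and satisfies $0\le u_c\le 2$. If $u_c$ vanishes at some point, the minimum principle gives $u_c\equiv0$, and the pointwise comparison below is trivial. So I may assume $u_c>0$.

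\emph{Step 2: Harnack comparison.} I would use the Herglotz/Poisson representation of positive harmonic functions on $\C_+$:
\[
u_c(x+iy)=\alpha y+\frac1\pi\int_\R\frac{y}{(x-s)^2+y^2}\,d\mu(s),\qquad \alpha\ge0,\ \mu\ge0 .
\]
Boundedness actually forces $\alpha=0$, but this is not needed. At $y=1$, the elementary inequality
\[
1+s^2\le 2(1+x^2)\bigl(1+(x-s)^2\bigr)
\]
gives the kernel comparison
\[
\frac{1}{(x-s)^2+1}\le \frac{2(1+x^2)}{1+s^2}.
\]
Integrating against $\mu$ (the linear term is identical at both points) yields
\[
u_c(x+i)\le 2(1+x^2)\,u_c(i).
\]
An equivalent route avoiding the representation is a Harnack chain of boundedly many discs of fixed radius in $\{\Im z\ge 1/2\}$ joining $i$ to $x+i$, for $|x|$ bounded.

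\emph{Step 3: integration in $c$.} Take $x=c\nu$. Then for all $c\in I$,
\[
\mathcal H(g_m(c,c\nu+i))\le 2(1+\nu^2\sup_I c^2)\,\mathcal H(g_m(c,i)).
\]
Integrating over $I$ gives the first inequality in \eqref{eq:eng}, with $C_I$ independent of $m$. Measurability in $c$ is clear from the real-analyticity in Lemma~\ref{l:pos}. The second inequality is \eqref{eq:avg} together with $\min_I P>0$, after enlarging $C_I$ by the factor $(\min_I P)^{-1}$, noting that $h_m=\mathcal H(g_m(\cdot,i))$. The final convergence to $0$ is \eqref{eq:h}, i.e.\ \eqref{eq:dec}.

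The only point needing care, and where I expect the main obstacle, is that the Harnack constant must be uniform in $m$. This holds because it depends only on the geometry of the two points $i$ and $c\nu+i$, not on the particular positive harmonic function.
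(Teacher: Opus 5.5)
Your proof is correct and follows the paper's argument: you apply Harnack's inequality in $z$ to the nonnegative harmonic function $\mathcal H(g_m(c,\cdot))$, with a constant uniform in $m$ and $c\in I$, then integrate in $c$ and invoke \eqref{eq:avg} with $\min_I P>0$. Your Poisson-kernel comparison only makes the paper's Harnack-chain constant explicit, namely $2(1+\nu^2\sup_I c^2)$.
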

\begin{proof}
For each real $c>0$, the function
$z\mapsto\mathcal H(g_m(c,z))$ is nonnegative and harmonic on $\C_+$.
The points $i$ and $c\nu+i$, for $c\in I$, lie in a fixed compact
subset of $\C_+$. Harnack's inequality
\cite[Theorem~1.18]{HK}, applied along a finite chain of disks in
$\C_+$, gives
\[
 \mathcal H(g_m(c,c\nu+i))\leq C_I\mathcal H(g_m(c,i)),
       \qquad c\in I.
\]
The constant is independent of $m$ and $c$. Integrating in $c$ proves
the first inequality. The second one was already established in 
\eqref{eq:avg} using $\min_I P>0$.
\end{proof}

\section{Existence of strong quasimodes}\label{s:res}

\subsection{Correction of the interface defect}
For $c>0$ we put $E_m(c)=(m\pi)^2+c\nu$, and define
the following candidate for a quasimode (see Remark 2 after
Lemma \ref{l:pos}): 
\begin{equation}\label{eq:corr}
 v_m=q_m+c\gamma_tU_m(c,c\nu+i), \ \ \
 (H_c-E_m(c))v_m =i(v_m-q_m),
\end{equation}
where the last identity follows from \eqref{eq:defect} and
\eqref{eq:source}.
Since $v_m\in H^1_0(\Omega)$ and $(H_c-E_m(c))v_m=i(v_m-q_m)$ belongs to
 $L^2(\Omega)$, $v_m\in D(H_c) =H^2(\Omega)\cap H^1_0(\Omega)$. 

The quality of the quasimode is measured by 
\begin{equation}\label{eq:A}
 A_m(c)=\norm{v_m-q_m}^2
       =c^2\gamma_t^2\Im g_m(c,c\nu+i),
\end{equation}
where the last equality comes from \eqref{eq:pos}. We also note that \eqref{eq:corr} gives (using the
resolvent identity $ 
(H-\bar z)^{-1}-(H-z)^{-1}=-2i\,\Im\,z\,(H-\bar z)^{-1}(H-z)^{-1}
$)
\begin{equation}\label{eq:Aid}
 A_m(c)=1-\Im\ip{(H_c-E_m(c)-i)^{-1}q_m}{q_m}, 
\end{equation}
and that
\begin{equation}\label{eq:sp}
 \begin{gathered}
 \norm{v_m}^2=1-A_m(c),\qquad
 \Re\ip{v_m}{q_m}=1-A_m(c).
 \end{gathered}
\end{equation}
In particular, $0\leq A_m(c)<1$, since \eqref{eq:corr} rules out $v_m=0$. 

\subsection{The averaged estimate}
We now fix  $I\Subset(0,\infty)$, and consider
write $g=g_m(c,c\nu+i)$, $ c \in I $. 
For $\Im g\geq0$, the definition \eqref{eq:defHg} gives
\[
 \mathcal H(g)\geq\frac{2\,\Im g}{1+\Im g}
   =\frac{2A_m(c)}{c^2\gamma_t^2+A_m(c)}.
\]
Since $A_m(c)<1$, it follows that
\begin{equation}\label{eq:Af}
 A_m(c)\leq\tfrac12(c^2\gamma_t^2+1)\mathcal H(g)
        \leq C_I\mathcal H(g),\qquad c\in I.
\end{equation}
Integrating \eqref{eq:Af} and applying Lemma~\ref{l:avg} gives
\begin{equation}\label{eq:avA}
 \begin{split}
 \int_I A_m(c)d c
 &\leq C_I\int_I\mathcal H(g_m(c,c\nu+i))d c\\
 &\leq C_I\mathcal H(g_m(1-i,i))\to 0.
 \end{split}
\end{equation}
Thus the intermediate integral is precisely the one in
\eqref{eq:eng}.

\subsection{Normalization and selection of a subsequence}
Writing $A_m=A_m(c)$ and $u_m=v_m/\sqrt{1-A_m}$, we obtain from
\eqref{eq:sp}
\begin{equation}\label{eq:fin}
 \begin{gathered}
 \norm{(H_c-E_m(c))u_m}=\sqrt{\frac{A_m}{1-A_m}},\qquad
 \norm{u_m}_{L^2(C)}^2\leq\frac{A_m}{1-A_m},\\
 \norm{u_m-q_m}^2=2-2\sqrt{1-A_m}.
 \end{gathered}
\end{equation}
The cap estimate uses $q_m|_C=0$ and
$\norm{v_m-q_m}^2=A_m$. The last equality follows from
$\Re\ip{v_m}{q_m}=1-A_m$ and $\norm{u_m}=\norm{q_m}=1$.
Also $u_m\in D(H_c)$.

To choose a common subsequence, use \eqref{eq:avA} to select
$m_k>m_{k-1}$, for $k\geq2$, such that
\[
 \int_{1/k}^{k} A_{m_k}(c)d c\leq2^{-k}.
\]
For each fixed $j\geq2$, Tonelli's theorem gives
$\sum_{k\geq j}A_{m_k}(c)<\infty$ for almost every
$c\in[1/j,j]$. Hence $A_{m_k}(c)\to0$ for almost every $c>0$.
Equations \eqref{eq:fin} prove Theorem~\ref{t:main}.
The change of parameter $a=c^{-1/2}$ preserves null sets on compact
positive intervals, giving the geometric formulation.

\section{Exact eigenfunctions}\label{s:ef}
Now we begin to work toward a more precise version of Theorem~\ref{t:1}. We will show in Theorem~\ref{t:ef} that there exists a sequence of eigenfunctions that localize to the bouncing-ball modes in the closed rectangle. To do so, we first construct a sequence of high-energy eigenfunctions whose $L^2$ mass of the semiclassical $x$-derivative tend to zero (see Lemma~\ref{lem:dx_control}). This means that the sequence must concentrate on the vertical modes.

The strategy is a variation of Hassell's argument in
\cite[\S\S 2--4]{Ha}. Using the averaged estimate \eqref{eq:avA}, we find that for almost every $c > 0$, there exists nonempty spectral windows centered at 
\[E_m(c):=(m\pi)^2+c\nu\]
in which we have control over over the eigenvalue derivative $\lambda'_j(c)$. This in turn controls the $x$ derivative of the corresponding eigenfunction. 

\subsection{Eigenvalue derivatives and counting}
Let $\lambda_j(c)$ denote the eigenvalues of $H_c$, repeated with multiplicity
and arranged in increasing order. 
One can view $H_c$ as a family of Laplacians with respect to an analytic family of metrics on $\Omega$ 
and find that off crossing parameters, $\lambda_j(c)$ can be followed analytically; compare \cite[\S 2]{Ha}. For a normalized eigenfunction belonging to an
analytic branch, differentiation of the form gives
\begin{equation}\label{eq:hf}
 \lambda_j'(c)=\norm{\partial_x\varphi_j(c)}^2\ge0.
\end{equation}
At a crossing one diagonalizes the form $\ip{\partial_xu}{\partial_xv}$ in the
eigenspace. Isolated crossings form a null set. If a multiple eigenvalue persists
on an interval, this form restricted to the eigenspace is $\lambda_j'(c)$ times the identity.
Consequently the estimates below apply to all eigenfunctions in a given
eigenspace, outside a null set. No simplicity hypothesis is used.

Fix a compact parameter interval $I=[c_-,c_+]\Subset(0,\infty)$. Write
\[
 N_m(c,\eta)=\#\{j:|\lambda_j(c)-E_m(c)|\le\eta\},\qquad 0<\eta\le1.
\]
The Weyl law reads
\[
 N_c(L)=\frac{|\Omega|}{4\pi\sqrt c}L+o(L), \quad N_c(L) := \#\{j : \lambda_j(c) \le L\},
\]
where the $o(L)$ remainder is uniform over $c \in I$. Therefore, it follows that 
\begin{equation}\label{eq:cnt}
 r_m:=\frac{1}{(m\pi)^2}\int_I N_m(c,1)d c\to 0.
\end{equation}
We have the following lemma which controls the average number eigenvalues in a size $\eta$ window of $E_m(c)$ with poorly behaved derivative in $c$. 

\begin{lemma}\label{l:fast}
For $0<\eta\le1$ and $\beta>0$, let
\[
 F_m(c;\eta,\beta)=\#\{j:|\lambda_j(c)-E_m(c)|<\eta,
                       \ \lambda_j'(c)-\nu>\beta (m\pi)^2\}.
\]
There is $C_I$ independent of $m,\eta,\beta$ such that
\begin{equation}\label{eq:fast}
 \int_I F_m(c;\eta,\beta)d c
 \le \frac{C_I\eta+\nu r_m}{\beta}.
\end{equation}
\end{lemma}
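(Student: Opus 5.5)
The plan is Hassell's flux (Chebyshev) argument, applied to the eigenvalues measured relative to the moving centre $E_m(c)$. Put $\mu_j(c)=\lambda_j(c)-E_m(c)$, with $\lambda_j(c)$ the increasing ordering. I will use that each $\lambda_j$ is locally Lipschitz on $I$, being monotone and piecewise analytic by the discussion around \eqref{eq:hf}. Its a.e. derivative is $\lambda_j'(c)=\norm{\partial_x\varphi_j(c)}^2\ge0$, with the eigenspace diagonalization handling crossings. Hence $\mu_j'=\lambda_j'-\nu\ge-\nu$ a.e. Chebyshev's inequality gives
\[
F_m(c;\eta,\beta)\le\frac{1}{\beta(m\pi)^2}\sum_j \mathbf 1_{\{|\mu_j(c)|<\eta\}}\,(\mu_j'(c))_+ .
\]
So it suffices to bound $\int_I\sum_j\mathbf 1_{\{|\mu_j|<\eta\}}(\mu_j')_+\,dc$ by $C_I\eta(m\pi)^2+\nu(m\pi)^2r_m$.

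For a single $j$, introduce the clamp $\Phi(\mu)=\min(\max(\mu,-\eta),\eta)$. Since $\Phi\circ\mu_j$ is Lipschitz, the chain rule for Lipschitz compositions gives
\[
\int_I\mathbf 1_{\{|\mu_j|<\eta\}}\mu_j'\,dc=\Phi(\mu_j(c_+))-\Phi(\mu_j(c_-))\le2\eta .
\]
Splitting $\mu_j'=(\mu_j')_+-(\mu_j')_-$ and using $(\mu_j')_-\le\nu$ then gives
\[
\int_I\mathbf 1_{\{|\mu_j|<\eta\}}(\mu_j')_+\,dc\le2\eta+\nu\int_I\mathbf 1_{\{|\mu_j|<\eta\}}\,dc .
\]
This bound is only nonzero for indices $j$ whose branch actually meets the window $|\mu_j|<\eta$ for some $c\in I$. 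Summing over $j$ and using $\eta\le1$, the second terms total at most $\nu\int_IN_m(c,1)\,dc=\nu(m\pi)^2r_m$ by \eqref{eq:cnt}.

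The step needing care is counting the indices $j$ whose branch meets the window during $I$. By monotonicity in $c$, such a $j$ satisfies both
\[
\lambda_j(c_-)\le (m\pi)^2+\nu c_++1 ,\qquad \lambda_j(c_+)\ge (m\pi)^2+\nu c_--1 .
\]
The number of such $j$ is therefore at most
\[
N_{c_-}\bigl((m\pi)^2+\nu c_++1\bigr)-N_{c_+}\bigl((m\pi)^2+\nu c_--1\bigr)+1 .
\]
By the Weyl law this is
\[
\frac{|\Omega|}{4\pi}\Bigl(\frac1{\sqrt{c_-}}-\frac1{\sqrt{c_+}}\Bigr)(m\pi)^2+o(m^2)\le C_I(m\pi)^2 .
\]
This count is of order $m^2$, not $O(1)$. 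That is fine, because the Chebyshev step divides by $\beta(m\pi)^2$. The first terms then total at most $2\eta C_I(m\pi)^2$.

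Combining the two totals and dividing by $\beta(m\pi)^2$ yields \eqref{eq:fast}, with a constant depending only on $I$ (and $\Omega$, $\nu$). The main technical point to verify is the a.e. identification of $\lambda_j'$ for the ordered, non-analytic labels at crossings. For this I would rely on the remark after \eqref{eq:hf} that crossings form a null set and that persistent multiplicities are handled by diagonalizing the form $\ip{\partial_xu}{\partial_xv}$ on the eigenspace.
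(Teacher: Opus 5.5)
Your proposal is correct and follows the paper's proof essentially step for step. It uses the same clamp applied to $\lambda_j-E_m(c)$, the bound $(\lambda_j-E_m)'\ge-\nu$ to control the positive part, Chebyshev at level $\beta(m\pi)^2$, a Weyl-law count of $O(m^2)$ contributing indices, and the summation of the second terms to at most $\nu\int_I N_m(c,1)\,dc=\nu(m\pi)^2r_m$. The only cosmetic differences are that you apply Chebyshev pointwise in $c$ before integrating and count contributing indices two-sidedly via monotonicity, whereas the paper applies Chebyshev index by index and simply uses $j<C_Im^2$.
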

\begin{proof}
Put $f_j(c) := \lambda_j(c) - E_m(c) = \lambda_j(c)-(m\pi)^2-c\nu$. Note that $f_j'\ge-\nu$ where
 the derivative exists. Let
\[
 T_\eta(s):= -\eta \mathbf 1_{ (-\infty ,  -\eta)  } + s \mathbf 1 _{[-\eta, \eta ] } + \eta \mathbf 1_{(\eta, \infty ) } . 
\]
Then 
$ \dfrac{d}{dc} T_\eta(f_j(c))  = \boldsymbol1_{\{|f_j| < \eta\}} f'_j(c)$, so
\[
 \int_I \boldsymbol1_{\{|f_j|<\eta\}}f_j'd c
   =T_\eta(f_j(c_+))-T_\eta(f_j(c_-))\le2\eta.
\]
Using $f_j'\ge-\nu$, we therefore obtain
\[
 \int_I\boldsymbol1_{\{|f_j|<\eta\}}(f_j')_+d c
 \le 2\eta+\nu\int_I\boldsymbol1_{\{|f_j|<\eta\}}d c.
\]
Chebyshev's inequality then gives us control over the size of the set of parameters for which $f'_j = \lambda'_j(c) - \nu$ is poorly controlled:
\[
|\{c\in I: |f_j(c)|<\eta,\ f_j'(c)>\beta(m\pi)^2\}|\\
\leq\frac{1}{\beta(m\pi)^2}\left(2\eta+\nu\int_I\boldsymbol1_{\{|f_j|<\eta\}}dc\right).
\]
By the Weyl law, there exists $C_I > 0$ depending only on $I$ such that if $|\lambda_j(c) - E_m(c)| < \eta$ for some $c \in I$, then $j < C_I m^2$. Therefore, we can bound
\begin{equation}
\begin{aligned}
\int_I F_m(c;\eta,\beta)\,dc 
&\leq\sum_{j<C_I m^2}|\{c\in I: |f_j(c)|<\eta,\ f_j'(c)>\beta(m\pi)^2\}|\\
&\leq\frac{C_I\eta+\nu r_m}{\beta}.
\end{aligned}
\end{equation}
Here, $C_I > 0$ may change from line to line but remains independent of $m, \eta, \beta$. This recovers~\eqref{eq:fast}.
\end{proof}

\subsection{Selection in shrinking windows}
From~\eqref{eq:hf}, we see that controlling $\partial_x \varphi_j(c)$ amounts to controlling $\lambda'_j(c)$. Lemma~\ref{l:fast} tells us that on average in the parameter $c$, we have good control in a spectral window as long as the size of the window $\eta$ is small. In the following lemma, we will pick a good sequence of shrinking spectral windows so that we have the desired control with probability $1$. 
\begin{lemma}\label{lem:dx_control}
For almost every $c>0$ there are integers $m_k\to\infty$ and normalized
real eigenfunctions $\varphi_k\in H^2 ( \Omega ) \cap 
H_0^1 ( \Omega ) $ such that
\begin{equation}\label{eq:ef}
 \begin{gathered}
 H_c\varphi_k=\lambda_k\varphi_k,\qquad
 \lambda_k=(m_k\pi)^2+c\nu+o(1),\\
 \lambda_k^{-1}\norm{\partial_x\varphi_k}_{L^2(\Omega)}^2\to0.
 \end{gathered}
\end{equation}
On each compact parameter interval the integers $m_k$ may be chosen
independently of $c$, outside a null set.
\end{lemma}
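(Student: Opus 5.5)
The plan is to combine the averaged quasimode estimate \eqref{eq:avA} with the counting bound of Lemma~\ref{l:fast}, through a Borel--Cantelli selection in the parameter $c$. We fix a compact interval $I=[c_-,c_+]\Subset(0,\infty)$; exhausting $(0,\infty)$ by $I_j=[1/j,j]$ and taking a diagonal argument, as at the end of \S\ref{s:res}, then gives the almost-every statement. We choose a sequence $\eta_k\downarrow0$, say $\eta_k=2^{-k}$, and put $\beta_k=\sqrt{\eta_k}$. Then we pick integers $m_k>m_{k-1}$ so large that three conditions hold.

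First, by \eqref{eq:avA},
\[
 \int_I A_{m_k}(c)\,dc\le 2^{-k}\eta_k^2/8 .
\]
Second, by \eqref{eq:cnt}, $\nu r_{m_k}\le\eta_k$. Third, $m_k\ge k$. These choices are independent of $c$, which is the last assertion of the lemma.

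The first step is to produce eigenvalues in the shrinking windows. If $A_m(c)\le\eta^2/8$, then the normalized quasimode $u_m$ from \eqref{eq:fin} satisfies $\norm{(H_c-E_m(c))u_m}<\eta$. By the spectral theorem for the self-adjoint operator $H_c$, there is an eigenvalue $\lambda_j(c)$ with $|\lambda_j(c)-E_m(c)|<\eta$. By Chebyshev's inequality and the first choice,
\[
 \bigl|\{c\in I: A_{m_k}(c)>\eta_k^2/8\}\bigr|\le 2^{-k}.
\]
Borel--Cantelli then shows that for a.e.\ $c\in I$ and all large $k$, the window $\{|\lambda-E_{m_k}(c)|<\eta_k\}$ contains an eigenvalue of $H_c$.

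The second step is to ensure that every eigenvalue in the window has small derivative. By Lemma~\ref{l:fast} and Chebyshev,
\[
 \bigl|\{c\in I: F_{m_k}(c;\eta_k,\beta_k)\ge1\}\bigr|
 \le \frac{C_I\eta_k+\nu r_{m_k}}{\beta_k}\le (C_I+1)\sqrt{\eta_k},
\]
which is summable. So for a.e.\ $c$ and all large $k$, $F_{m_k}(c;\eta_k,\beta_k)=0$. We also discard the null set of crossing parameters, and for each $k$ the null set where some $\lambda_j$ with $j<C_Im_k^2$ fails to be differentiable; this is a countable union of null sets. At the remaining $c$, every eigenvalue in the window satisfies $\lambda_j'(c)\le\nu+\beta_k(m_k\pi)^2$. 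By \eqref{eq:hf} and the remark on persistent multiplicities, every normalized real eigenfunction $\varphi_k$ in that eigenspace then satisfies
\[
 \norm{\partial_x\varphi_k}^2\le \nu+\beta_k(m_k\pi)^2 .
\]
Since $\lambda_k=(m_k\pi)^2+c\nu+O(\eta_k)$, we get $\lambda_k^{-1}\norm{\partial_x\varphi_k}^2\lesssim\beta_k+m_k^{-2}\to0$, which is \eqref{eq:ef}. Real eigenfunctions are available because $H_c$ is real, and $\varphi_k\in H^2\cap H^1_0$ because it lies in $D(H_c)$.

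I expect the main obstacle to be the justification of \eqref{eq:hf} almost everywhere in $c$ for all $j$ simultaneously, since only countably many $j$ matter for each $k$. This requires the analytic-branch description: differentiability of the ordered $\lambda_j$ off a null set, and the identification of $\lambda_j'$ with $\norm{\partial_x\varphi}^2$ for every eigenfunction in a persistent multiple eigenspace. I would take this from the Kato--Rellich analytic perturbation theory quoted above, following \cite[\S 2]{Ha}. A related technical point is that the ordered eigenvalues are Lipschitz in $c$ on $I$, which makes the fundamental-theorem step in Lemma~\ref{l:fast} legitimate; this follows from monotonicity in $c$ together with min-max.
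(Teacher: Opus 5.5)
Your proposal is correct and follows the paper's proof. Both run a Borel--Cantelli argument over two summable exceptional sets in $c$, and then apply \eqref{eq:hf}. The first set consists of parameters with an empty window, controlled by Chebyshev applied to $\int_I A_m\,dc$. The second consists of parameters whose window contains an eigenvalue with $\lambda_j'-\nu>\beta(m\pi)^2$, controlled by Lemma~\ref{l:fast}.

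The differences are cosmetic. You fix $\eta_k=2^{-k}$ before choosing $m_k$, rather than taking $\eta_m=(\alpha_m(I)+m^{-1})^{1/4}$. You also locate an eigenvalue through the normalized quasimode $u_m$ and the distance-to-spectrum bound instead of the projector estimate \eqref{eq:prj}, which gives an open window and sidesteps the paper's endpoint remark. Finally, to pass to all $c>0$ a countable union over the intervals $I_j$ suffices, so no diagonal argument is needed.
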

\begin{proof}
Recall the quantity $A_m(c)$ from~\eqref{eq:Aid}. Set $\alpha_m(I)=\int_I A_m(c)d c$, which tends to zero by
\eqref{eq:avA}.
The spectral theorem gives
\begin{equation}\label{eq:sd}
 A_m(c)=\int_\R\frac{(s-E_m(c))^2}{1+(s-E_m(c))^2}
                     d\mu_{q_m,c}(s),
 \qquad 0\le A_m(c)<1.
\end{equation}
Let $\Pi_m(c,\eta)=\boldsymbol1_{[E_m(c)-\eta,E_m(c)+\eta]}(H_c)$. Then
\begin{equation}\label{eq:prj}
 \norm{(1-\Pi_m(c,\eta))q_m}^2
 \le\frac{1+\eta^2}{\eta^2}A_m(c).
\end{equation}
In particular, it follows from Chebyshev's inequality that 
\begin{equation}\label{eq:empty_probability}
\begin{split}
\epsilon_m := |\{c\in I:\Pi_m(c,\eta)q_m=0\}|
&\leq |\{c\in I:(1+\eta^2)A_m(c)\geq \eta^2 \}|\\
&\leq\frac{1+\eta^2}{\eta^2}\alpha_m(I).
\end{split}
\end{equation}
In other words, the measure of parameters for which the spectral window is empty is at
most $(1+\eta^2)\alpha_m(I)/\eta^2$.

For large $m$, choose
\[
\eta_m=\big(\alpha_m(I)+m^{-1}\big)^{1/4},\qquad
 \beta_m=(\eta_m+r_m)^{1/2}.
\]
Both tend to zero. Moreover, choosing $\epsilon_m$ with $\eta_m$ in the spectral projector $\Pi_m(c, \eta_m)$, we from~\eqref{eq:empty_probability} that $\epsilon_m \to 0$. By Lemma~\ref{l:fast}, the measure of parameters with any eigenvalue
in the window satisfying $\lambda_j'-\nu>\beta_m(m\pi)^2$ also tends to zero, that is
\[ \rho_m:=|\{c\in I:F_m(c;\eta_m,\beta_m)\geq1\}|\to0.\]
We can choose a sequence $m_k\to\infty$ so that
\[
\sum_k\left(\epsilon_{m_k} +\rho_{m_k}
\right)<\infty.
\]
Then by the Borel--Cantelli lemma, we see that for almost every $c\in I$,
all sufficiently large $k$ then have a nonempty window and every eigenvalue
in its interior satisfies
\begin{equation}\label{eq:slp}
 \lambda_j'(c)\le\nu+\beta_{m_k}(m_k\pi)^2.
\end{equation}
An endpoint equality causes no difficulty: unless the analytic function $\lambda_j(c)-E_m(c)$ is constant,
it occurs on a null set; if it persists then $\lambda_j'=\nu$.

Choose any normalized real eigenfunction in the window. Equations
\eqref{eq:hf} and \eqref{eq:slp} give
\begin{equation}
 H_c\varphi_k=\lambda_k\varphi_k,\qquad
 \lambda_k=(m_k\pi)^2+c\nu+o(1),\qquad
 \lambda_k^{-1}\norm{\partial_x\varphi_k}^2\to0
\end{equation}
as desired.
\end{proof}
Observe that \eqref{eq:prj} also shows that the projections of $q_{m_k}$
onto these windows approach $q_{m_k}$ in norm for almost every $c$.
This does \emph{not} assert that a single eigenfunction approaches $q_{m_k}$.
Finally take a countable cover of $(0,\infty)$ by compact intervals.

\section{Localization of the eigenfunctions}\label{s:efloc}
We work on the full physical domain $S_a$ to avoid the artificial
corners of a quarter-domain. Let $\psi_k$ be the normalized odd
reflections of the quarter-domain eigenfunctions $\varphi_k$ satisfying~\eqref{eq:ef}. Recall that such eigenfunctions were selected in Section~\ref{s:ef}.
Set $h_k=\lambda_k^{-1/2}$. In particular, put
\[\psi_k(X,Y)=(2\sqrt a)^{-1}\varphi_k(X/a-t,Y), \quad X, Y \ge 0\]
in the first quadrant and extend oddly across
both axes. The change of variables gives
\begin{equation}\label{eq:phx}
 \norm{h_k\partial_X\psi_k}_{L^2(S_a)}^2
 =c\,\frac{\norm{\partial_x\varphi_k}_{L^2(\Omega)}^2}{\lambda_k}
 \to0.
\end{equation}
We work in the full stadium coordinates from now on, and by a slight abuse of notation, we write $ ( x, y ) $ to mean the full stadium $ ( X, Y ) $-coordinates from now on.

Extend $\psi_k$ by zero to $\R^2$. The extensions belong to
$H^1(\R^2)$ and satisfy $\norm{\psi_k}_{L^2} =1$ and
$\norm{h_k\nabla\psi_k}_{L^2}=1$.
The Dirichlet semiclassical measure construction for $C^{1,1}$
boundaries \cite{GL} gives, after passage to a subsequence, a
probability measure $\mu$ with support on the unit cosphere bundle of $\overline{S_a}$. Moreover, by~\eqref{eq:phx}, we further see that
\begin{equation}\label{eq:supp_mu}
 \supp \mu \subset \{(x,y;\xi):(x,y)\in\overline{S_a},\ \xi = (0, \pm 1)\}.
\end{equation}

The support of $\mu$ propagates along generalized billiard trajectories at
smooth boundary points: see \cite[Th\'eor\`eme~15 and (iv$'$),
pp.~180--181]{Bu97}, applied to the time-harmonic wave solutions.
A vertical trajectory in an open wing meets the elliptical arc transversally
and its reflected direction is no longer vertical. At a tip the vertical
trajectory is gliding, and the tangent to the ellipse immediately ceases to
be vertical. Both possibilities contradict \eqref{eq:supp_mu}. Hence
$\mu$ vanishes over the open wings. We have used propagation only at smooth
points of the boundary, not at the joins.

Interior propagation now shows that the spatial marginal of $\mu$ is
constant along vertical intervals in $|y|<1$. It gives no mass to the
straight boundary, including the joins.

Combining this with Lemma~\ref{lem:dx_control}, we have the following
strengthening of Theorem~\ref{t:1}:
\begin{theorem}\label{t:ef}
For almost every $c>0$ there are integers $m_k\to\infty$ and normalized
real eigenfunctions $\psi_k\in H^2 (S_a) \cap 
H_0^1 (S_a) $ such that
\begin{equation*}
-\Delta \psi_k=\lambda_k\psi_k,\qquad
 \lambda_k=(m_k\pi)^2+c\nu+o(1),
\end{equation*}
and every semiclassical measure of this sequence has total
mass one, gives zero mass to $\partial S_a$, and is supported on
\[
 \overline{\mathcal B}_a=
 \{(x,y;0,\pm1): |x|\leq at,\ |y|\leq1\}.
\]
In particular, \eqref{eq:wings} holds. Moreover, on each compact parameter interval the integers $m_k$ may be chosen
independently of $c$, outside a null set.
\end{theorem}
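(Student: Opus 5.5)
The proof assembles Lemma~\ref{lem:dx_control} with the semiclassical defect measure argument sketched just before the statement. First, for almost every $c>0$, Lemma~\ref{lem:dx_control} provides $m_k\to\infty$ and normalized real eigenfunctions $\varphi_k$ of $H_c$ with $\lambda_k=(m_k\pi)^2+c\nu+o(1)$ and $\lambda_k^{-1}\norm{\partial_x\varphi_k}^2\to0$. On compact parameter intervals the $m_k$ are independent of $c$ off a null set.

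Next we reflect oddly across both axes after the rescaling $X=a(x+t)$. This gives Dirichlet eigenfunctions $\psi_k$ of $-\Delta$ on $S_a$, with $a=c^{-1/2}$ and eigenvalue $\lambda_k$. They lie in $H^2(S_a)$, since $S_a$ is $C^{1,1}$: the joins of the flat sides with the ellipses are $C^1$ with bounded curvature. The odd reflections of a Dirichlet problem are again Dirichlet eigenfunctions. We normalize them and set $h_k=\lambda_k^{-1/2}$, so that \eqref{eq:phx} gives $\norm{h_k\partial_X\psi_k}\to0$. Since $\norm{h_k\nabla\psi_k}=1$, the $y$-derivative carries all the semiclassical energy.

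For an arbitrary subsequence we take a semiclassical measure $\mu$, using the Gérard--Leichtnam construction \cite{GL} for $C^{1,1}$ Dirichlet problems. This gives total mass one, i.e.\ no loss at infinity or into the boundary in the $L^2$ sense; that is the content of \cite{GL}, since $\norm{h\nabla\psi}=\norm{\psi}$. The measure gives zero mass to $\partial S_a$: \cite{GL} yields that the boundary carries no mass for Dirichlet eigenfunctions, or one can argue via the Rellich/collar estimate as in \eqref{eq:arc}. The decay $\norm{h_k\partial_X\psi_k}\to0$ places $\supp\mu$ in $\xi=(0,\pm1)$, which is \eqref{eq:supp_mu}.

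The main step is to exclude the open wings $|x|>at$. We invoke Burq's propagation theorem \cite{Bu97} at smooth boundary points. It shows that $\supp\mu$ is invariant under the generalized billiard flow near the elliptical arcs, which are smooth away from the joins $|x|=at$, $|y|=1$. Take a point $(x,y;0,\pm1)$ with $at<|x|<at+a$. Follow the vertical line to the arc, noting that interior propagation keeps the direction vertical. If it hits transversally, the reflected direction is non-vertical, so $\supp\mu$ contains a non-vertical covector, a contradiction. At the tip $|x|=at+a$ the vertical ray is tangent, hence gliding, and gliding along the ellipse rotates the direction, again a contradiction. So $\mu$ vanishes over $|x|>at$.

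I expect the hardest step to be the justification near the joins, where the boundary is only $C^{1,1}$. One has to check that Burq's theorem is applied only on neighbourhoods of smooth points, and that points with $|x|$ slightly larger than $at$ are still handled. They are: every such point lies on a vertical segment reaching the arc at a smooth point strictly inside the wing. Consequently $\supp\mu\subset\overline{\mathcal B}_a$. The bound \eqref{eq:wings} follows from the closed support together with total mass one and no boundary mass. We test $\mu$ with symbols localized in $|x|\ge at+\varepsilon$; spatial marginals converge because $|\psi_k|^2\,dx\,dy\rightharpoonup\pi_*\mu$ weakly on $\overline{S_a}$, by tightness from total mass one. The independence of $m_k$ from $c$ is inherited from Lemma~\ref{lem:dx_control}, and a countable cover of $(0,\infty)$ completes the proof.
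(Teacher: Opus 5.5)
\textbf{Gap: the zero boundary mass claim.} Most of your outline matches the paper: Lemma~\ref{lem:dx_control}, rescaling and odd reflection, the measure of \cite{GL}, vertical support from \eqref{eq:phx}, and Burq's propagation at smooth arc points to empty the open wings and the tips. What is not proved is the part of the statement that $\mu$ gives zero mass to $\partial S_a$.

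Neither of your two justifications works.
\begin{itemize}
\item \cite{GL} does not say that Dirichlet eigenfunctions put no mass on the boundary. Whispering-gallery modes of a disc have semiclassical measures carried entirely by the glancing set over $\partial D$. The correct statement is that mass cannot sit at hyperbolic boundary points.
\item Given \eqref{eq:supp_mu}, the relevant points are on the flat sides $y=\pm1$, $|x|\le at$, where vertical covectors are normal, hence hyperbolic. These sides lie inside $\overline{\mathcal B}_a$. They contain the joins $(\pm at,\pm1)$, where the boundary is only $C^{1,1}$ and Burq's smooth-boundary theorem is unavailable. So the genuinely delicate points at the joins are ones you never treat. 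Your ``hardest step'' remark concerns the wings, which, as you note yourself, avoid the joins.
\item Your fallback, a collar estimate as in \eqref{eq:arc}, fails. That estimate used the unscaled bound $\norm{\partial_x u}=O(1)$ from Lemma~\ref{l:cap}. Here you only have $\norm{\partial_x\psi_k}=o(h_k^{-1})$, which gives $\varepsilon^2\,o(h_k^{-2})$ for a collar of fixed width $\varepsilon$. Near $y=\pm1$ the relevant derivative is $\partial_y$, of size $h_k^{-1}$, so the collar bound is useless there too.
\end{itemize}

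\textbf{How the paper closes it.} The paper uses a Rellich-type multiplier adapted to the flat sides.
\begin{itemize}
\item Take $b(y)=y(1-y^2)$. Commuting $-h^2\Delta$ with $b\partial_y$, and testing the equation against $b'\bar u$, gives $2\int b'|u|^2=\int_{\partial S_a}bn_y|h\partial_nu|^2\,ds+O(\norm{h\partial_xu}^2+h^2)$.
\item On $\partial S_a$ one has $0\le bn_y\le xn_x$. The $x\partial_x$ Rellich identity gives $\int_{\partial S_a}xn_x|h\partial_nu|^2\,ds=2\norm{h\partial_xu}^2\to0$. Hence $\int b'\,d\rho=0$ for the spatial marginal $\rho$.
\item Interior propagation gives $d\rho=d\sigma(x)\,dy$ on $\{|x|\le at,\ |y|<1\}$. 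You never state this step, but it is needed here. This part contributes zero because $\int_{-1}^1b'\,dy=0$.
\item Since $b'(\pm1)=-2$, it follows that $\rho(\{|y|=1\})=0$.
\end{itemize}
Alternatively, one can invoke Burq away from the joins and the $C^{1,1}$ analysis of \cite{GL} at hyperbolic points at the joins, as the paper remarks. That has to be argued specifically for the joins, not asserted as a blanket property of Dirichlet eigenfunctions.
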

\begin{proof}
Interior propagation shows that the spatial marginal of $\mu$ is
constant along the vertical lines $\{(x_0,y):|y|<1\}$ for all
$|x_0|\leq at$. It remains to check that there is no mass on the
straight boundary. Away from the joins $ |y|=1 $ and $ |x| = at $, 
the result follows from \cite{Bu97} and at the joints we could use the $ C^{1,1} $ analysis
for \cite{GL}. Instead, we present an elementary argument based on the Rellich identities. 

So, we write $u=\psi_k$, $h=h_k$, let $n$ be the outward
unit normal, and define $b(y)=y(1-y^2)$.
Using that $(-h^2\Delta-1)u=0$ and the boundary condition $u|_{\partial S_{a}} =0$, it follows from Green's formula that 
\[
\begin{split}
 \int_{\partial S_a}bn_y|h\partial_nu|^2\,ds
 &={\rm Re}\int_{S_a}[-h^2\Delta,b\partial_y]u\,\overline u\,dxdy\\
 &=2\int_{S_a}b'|h \partial_y u|^2\,dxdy
   -\frac{h^2}{2}\int_{S_a}b'''|u|^2\,dxdy.
\end{split}
\]
Here we used
$[-h^2\Delta,b\partial_y]=-2h^2b'\partial_y^2-h^2b''\partial_y$
and integrated by parts. Testing the eigenfunction equation against
$b'\overline u$ also gives
\[
 \int_{S_a}b'\bigl(|h\nabla u|^2-|u|^2\bigr)\,dxdy
 =\frac{h^2}{2}\int_{S_a}b'''|u|^2\,dxdy.
\]
Combining these identities and using $\norm u=1$, we obtain
\begin{equation}\label{eq:b_lim}
 2\int_{S_a}b'|u|^2\,dxdy
 =\int_{\partial S_a}bn_y|h\partial_nu|^2\,ds
   +O(\norm{h \partial_x u}^2+h^2).
\end{equation}

On $\partial S_a$ we have $0\leq bn_y\leq xn_x$. The same Green
identity with $x\partial_x$, for which
$[-h^2\Delta,x\partial_x]=-2h^2\partial_x^2$, gives Rellich's identity:
\[
 \int_{\partial S_a}xn_x|h\partial_nu|^2\,ds
 =-2h^2{\rm Re}\int_{S_a} \partial_x^2 u\overline u\,dxdy
 =2\norm{h \partial_x u}^2\to0.
\]
Therefore, the right-hand-side of \eqref{eq:b_lim} tends to zero as $h \to 0$. This means that $\int b'(y)\,d\rho=0$, where $\rho$ is the spatial marginal of the semiclassical measure
$\mu$. The semiclassical measure restricted to the interior of the rectangle has the form $d\sigma(x)\,dy$, so the interior contributes
zero since $\int_{-1}^1b'(y)\,dy=0$. Since $b'(\pm1)=-2$, the mass on the rectangular boundary
$y=\pm1$ is zero.
\end{proof}

Note that for each fixed $\varepsilon>0$, vanishing mass on
$\{|x|\geq at+\varepsilon\}$ follows by a continuous cutoff and compactness, so Theorem~\ref{t:1} indeed follows from Theorem~\ref{t:ef}.
Again, we stress that this does not imply that
$\norm{\varphi_k-q_{m_k}}\to0$ for individual eigenfunctions.

\end{document}